\let\vec\relax
\DeclareMathAccent{\vec}{\mathord}{letters}{"7E}
\journalname{}
\date{ \phantom{b} \vspace{45mm}\phantom{e}}
\newcommand*{\dt}[1]{ \accentset{\mbox{\large\bfseries .}}{#1}}
\DeclareMathOperator{\bigtimes}{{\hbox{\large\sf X}}}
\def\iu{{\rm i}}
\def\R{{\mathbb R}}
\def\C{{\mathbb C}}
\def\eps{\varepsilon}
\def\wh{\widehat}
\def\wt{\widetilde}
\def\P{\mathrm{P}}
\newcommand\calM{\mathcal{M}}
\newcommand\bfr{{\mathbf r}}
\newcommand\bfI{{\mathbf I}}
\newcommand\bfA{{\mathbf A}}
\newcommand\bfB{{\mathbf B}}
\newcommand\bfD{{\mathbf D}}
\newcommand\bfF{{\mathbf F}}
\newcommand\bfG{{\mathbf G}}
\newcommand\bfH{{\mathbf H}}
\newcommand\bfK{{\mathbf K}}
\newcommand\bfM{{\mathbf M}}
\newcommand\bfN{{\mathbf N}}
\newcommand\bfP{{\mathbf P}}
\newcommand\bfQ{{\mathbf Q}}
\newcommand\bfS{{\mathbf S}}
\newcommand\bfU{{\mathbf U}}
\newcommand\bfV{{\mathbf V}}
\newcommand\bfY{{\mathbf Y}}
\newcommand\bfZ{{\mathbf Z}}
\newcommand\bfSigma{{\mathbf \Sigma}}
\newcommand\bfnabla{{\boldsymbol \nabla}}
\def\eps{\varepsilon}
\def\phi{\varphi}
\def\diag{\mbox{diag}}
\renewcommand{\Re}{{\mbox{\rm Re}}}
\author{Gianluca Ceruti, Jonas Kusch, and  Christian~Lubich}
\title{A rank-adaptive robust integrator for \\ dynamical low-rank approximation}
\date{}
\institute{G. Ceruti and Ch. Lubich \at
	Mathematisches Institut, Universit{\"a}t T{\"u}bingen, Auf der Morgenstelle 10, D-72076 T{\"u}bingen, Germany.
	\email{\{ceruti, lubich\}@na.uni-tuebingen.de}     
	\\
	J. Kusch \at Karlsruhe Institute of Technology, Kaiserstr. 12, 76131, Karlsruhe, Germany. \email{jonas.kusch@kit.edu}      %  \\
	%             \emph{Present address:} of F. Author  %  if needed
	%	\and
	%	G. Ceruti \at
	%	Mathematisches Institut, Universit{\"a}t T{\"u} bingen, Auf der Morgenstelle 10, D-72076 T{\"u}bingen, Germany \\
	%	\email{ceruti@na.uni-tuebingen.de}
}
\begin{document}
	\maketitle
	
	\begin{abstract}  A rank-adaptive integrator  for the dynamical low-rank approximation of matrix and tensor differential equations is presented. The fixed-rank integrator recently proposed by two of the authors is extended to allow for an adaptive choice of the rank, using subspaces that are generated by the integrator itself. The integrator first updates the evolving bases and then does a Galerkin step in the subspace generated by both the new and old bases, which is followed by rank truncation to a given tolerance. It is shown that the adaptive low-rank integrator retains the exactness, robustness and symmetry-preserving properties of the previously proposed fixed-rank integrator. Beyond that, up to the truncation tolerance, the rank-adaptive integrator preserves the norm when the differential equation does, it preserves the energy for Schr\"odinger equations and Hamiltonian systems, and it preserves the monotonic decrease of the functional in gradient flows. Numerical experiments illustrate the behaviour of the rank-adaptive integrator.
	
%	We propose and analyse a numerical integrator for dynamical low-rank approximation
%	
%	that computes a low-rank approximation to large time-depen\-dent matrices that are either given explicitly via their increments or are the unknown solution to a matrix differential equation.  
%	
%	
%	
%	Furthermore, the integrator is extended to the approximation of  time-dependent tensors by Tucker tensors of fixed multilinear rank. The proposed low-rank integrator is different from the known projector-splitting integrator for dynamical low-rank approximation, but it retains the important robustness to small singular values that has so far been known only for the projector-splitting integrator. The new integrator also offers some potential advantages over the projector-splitting integrator: It avoids the backward time integration substep of the projector-splitting integrator, which is a potentially unstable substep for dissipative problems. It offers more parallelism, and it preserves symmetry or anti-symmetry of the matrix or tensor when the differential equation does.
%	Numerical experiments illustrate the behaviour of the proposed integrator.
	%Include keywords and mathematical subject classification numbers as needed.
	\keywords{dynamical low-rank approximation \and rank adaptivity \and structure-preserving integrator \and matrix and tensor differential equations}
	% \PACS{PACS code1 \and PACS code2 \and more}
	\subclass{65L05 \and 65L20 \and 65L70 \and 15A69}
\end{abstract}

	\section{Introduction}
	
	In \cite{CeL21}, a robust integrator for dynamical low-rank approximation of large matrix and tensor differential equations was proposed and analysed. As we show in this paper, that integrator allows for a remarkably simple extension to determine the rank adaptively, while retaining its favourable properties. In a step of the integrator of \cite{CeL21}, we first update the bases and then do a Galerkin step in the subspace generated by the {\it new} bases. In the rank-adaptive integrator proposed here, we do instead a Galerkin step in the larger subspace generated by both the {\it new and old} bases and then truncate to a given tolerance. This approach overcomes the problem of how to augment the bases to increase the rank, both in a very simple and very effective way, as will be demonstrated by our numerical experiments.
	
	Strategies of rank adaptivity for dynamical low-rank approximation in the context of the projector-splitting integrator of \cite{LubichOseledets,lubich2015time} have recently been proposed by Dektor, Rodgers \& Venturi \cite{DeRV20} and Yang \& White \cite{yang2020time}, and we are aware of ongoing work by Schrammer \cite{Schr21}. Those approaches are substantially different from what is proposed here.
	
	In Section 2 we present the rank-adaptive integrator for matrix differential equations and show that it has the same exactness property and robustness to small singular values as the fixed-rank integrator of \cite{CeL21}. It also preserves symmetry and skew-symmetry of the matrix when the matrix differential equation does. 
	
	In Section 3 we show further interesting features that are not available with the integrator of \cite{CeL21}. The following remarkable properties are satisfied in each step {\it up to the truncation tolerance} or a moderate multiple of it:
	\begin{itemize}
	\item The rank-adaptive integrator applied to a gradient system decreases the functional to be minimized.	
	\item The integrator preserves the norm when the differential equation does.
	\item The integrator applied to a matrix or tensor Schr\"odinger equation preserves the energy.
	\item The integrator applied to a Hamiltonian system (in an appropriate way) preserves the energy.
	\end{itemize}
These near-conservation properties come about by the Galerkin approach and the fact that the projection of the initial value to the augmented bases coincides with the  original initial value.
	
	In Section 4 we extend the rank-adaptive integrator to tensor differential equations whose solutions are approximated by Tucker tensors of varying multilinear rank. This is an extension of the fixed-rank tensor integrator of \cite{CeL21} that is analogous to the extension from fixed rank to adaptive rank in the matrix case.
	
	In Section 5 we present numerical experiments with examples from the fields of kinetic equations and uncertainty quantification, where dynamical low-rank approximation has recently found much interest, e.g. in \cite{EiHY21,EiJ21,EiL18,PeMF20} and \cite{FeL18,MuN18,MuNV20,SaL09}, beyond the original application area of quantum dynamics, e.g. \cite{MeMC90,MeyerGW09}  and \cite{haegeman2011time,haegeman2016unifying}.
	
	We describe the integrator for {\it real} matrices and tensors, but the algorithm and its properties extend in a straightforward way to {\it complex} matrices and tensors. This  only
requires care in using transposes $\bfU^\top$ versus adjoints $\bfU^*=\overline \bfU^\top$.
	
Throughout the paper,  matrices are written in boldface capital letters and tensors in italic capital letters.

	\section{A rank-adaptive robust low-rank matrix integrator}
	\label{sec:Par}
		Dynamical low-rank approximation of time-dependent matrices \cite{KochLubich07} approximates the solution $\bfA(t)\in\R^{m\times n}$ of a large (or too large) matrix differential equation
		\begin{equation} \label{ode-mat}
	\dt{\bfA}(t) = \bfF(t, \bfA(t)), 
	\qquad
	\bfA(t_0) = \bfA_0 
	\end{equation}
	by evolving matrices $\bfY(t)\in\R^{m\times n}$ of low rank, which are computed directly without first computing an approximation to the solution $\bfA(t)$. The initial low-rank matrix~$\bfY_0$ is typically obtained from a truncated singular value decomposition (SVD) of~$\bfA_0 $. Rank-$r$ matrices are represented in a non-unique factorized SVD-like form
	\begin{equation} \label{USV}
	\bfY = \bfU\bfS\bfV^\top, 
	\end{equation}
	where the slim matrices $\bfU\in \R^{m\times r}$ and $\bfV\in \R^{n\times r}$ each have $r$ orthonormal columns, 
	and the small matrix $\bfS\in \R^{r\times r}$ is invertible (but not necessarily diagonal).
	
		We present a modification of the fixed-rank integrator of \cite{CeL21} that retains its favourable properties but chooses the rank adaptively. This new integrator computes approximations $\bfY_n=\bfU_n\bfS_n\bfV_n^\top \approx \bfA(t_n)$ of an adaptively determined rank $r_n$ at discrete times $t_n$ ($n=0,1,2,\dots$). The stepsizes $h_n=t_{n+1}-t_n$ may also vary, but as we will not discuss stepsize selection in this paper, we take a constant stepsize $h>0$ for notational simplicity.
		
\subsection{Formulation of the algorithm} 
\label{subsec:alg-newint}
	One time step of integration from time $t_0$ to $t_1=t_0+h$  starting from a factored rank-$r_0$ matrix 
	$\bfY_0=\bfU_0\bfS_0\bfV_0^\top$ computes an updated factorization $\bfY_1=\bfU_1\bfS_1\bfV_1^\top$ of rank $r_1 \le 2r_0$.  In the following algorithm we let $r=r_0$ and we put a hat on quantities related to rank $2r$.
	
	\begin{enumerate}
		\item 
		Compute augmented basis matrices $\wh \bfU\in \R^{m\times 2r}$ and $\wh \bfV\in \R^{n\times 2r}$ (in parallel):
		%Update and extend $ \bfU_0 \rightarrow \wh \bfU$ and $ \bfV_0 \rightarrow \wh\bfV$ in parallel:
		\\[2mm]
		\textbf{K-step}:
		Integrate from $t=t_0$ to $t_1$ the $m \times r$ matrix differential equation
		\begin{equation}\label{K-step} 
		\dt{\textbf{K}}(t) = \bfF(t, \textbf{K}(t) \bfV_0^\top) \bfV_0, \qquad \textbf{K}(t_0) = \bfU_0 \bfS_0.
		\end{equation}
		Determine the columns of $\wh \bfU\in \R^{m\times 2r}$ as an orthonormal basis of the range 
		of the $m\times 2r$ matrix $(\textbf{K}(t_1),\bfU_0)$ (e.g.~by QR decomposition)
%		Determine $\wh \bfU\in \R^{m\times 2r}$ as the reduced orthogonal factor in the QR decomposition 
%		of the $m\times 2r$ matrix $(\textbf{K}(t_1),\bfU_0)$
		%Do a QR factorization of the $m\times 2r$ matrix $(\textbf{K}(t_1),\bfU_0) = \wh \bfU \wh {\bfR}_K$  
		and compute the $2r\times r$ matrix $\wh \bfM= \wh\bfU^\top \bfU_0$.
		%with $\wh \bfU_1 = (\bfU_0, \bfU_1^\perp) \in \R^{m\times 2r}$.		
		\\[2mm]
		\textbf{L-step}: 
		Integrate from $t=t_0$ to $t_1$ the $n \times r$ matrix differential equation
		\begin{equation}\label{L-step} 
		\dt{\textbf{L}}(t) =\bfF(t, \bfU_0 \textbf{L}(t)^\top)^\top  \bfU_0, \qquad \textbf{L}(t_0) = \bfV_0 {\bfS}_0^\top. 
		\end{equation}
		%Do a QR factorization of the $n\times 2r$ matrix $(\textbf{L}(t_1),\bfV_0) = \wh \bfV \wh{\bfR}_L$  
		Determine the columns of $\wh \bfV\in \R^{n\times 2r}$ as an orthonormal basis of the range 
		of the $n\times 2r$ matrix $(\textbf{L}(t_1),\bfV_0)$ (e.g.~by QR decomposition)
		and compute the $2r\times r$ matrix $\wh \bfN= \wh\bfV^\top \bfV_0$.
		\\[-2mm]
		\item
		Augment and update ${\bfS}_0 \rightarrow {\wh\bfS}(t_1)$\,: \\[1mm]
		\textbf{S-step}:  Integrate from $t=t_0$ to $t_1$ the $2r \times 2r$ matrix differential equation
		\begin{equation}\label{S-step} 
		\dt{\wh\bfS}(t) =  \wh\bfU^\top \bfF(t, \wh\bfU \wh\bfS(t) \wh\bfV^\top) \wh\bfV, 
		\qquad \wh\bfS(t_0) = \wh \bfM \bfS_0 \wh \bfN^\top.
		 %\begin{pmatrix} {\bfS}_0 & 0  \\ 0 & 0 \end{pmatrix}.
		\end{equation}
		\item \textbf{Truncation}:
		Compute the SVD $\; \wh\bfS(t_1)= \wh \bfP \wh \bfSigma \wh \bfQ^\top$ with $\wh\bfSigma=\diag(\sigma_j)$ and truncate to the tolerance~$\vartheta$: Choose the new rank $r_1\le 2r$ as the %number of singular values that are greater than $\vartheta$. 
		minimal number $r_1$ such that 
		$$
		\biggl(\ \sum_{j=r_1+1}^{2r} \sigma_j^2 \biggr)^{1/2} \le \vartheta.
		$$
		Compute the new factors for the approximation of $\bfY(t_1)$ as follows:
Let $\bfS_1$ be the $r_1\times r_1$ diagonal matrix with the $r_1$ largest singular values and let $\bfP_1\in \R^{2r\times r_1}$ and $\bfQ_1\in \R^{2r\times r_1}$ contain the first $r_1$ columns of $\wh \bfP$ and $\wh \bfQ$, respectively. Finally, set $\bfU_1 = \wh \bfU \bfP_1\in \R^{m\times r_1}$ and
$\bfV_1 = \wh \bfV \bfQ_1 \in \R^{n\times r_1}$.
	\end{enumerate} 
	The approximation after one time step is given by 
	\begin{equation}\label{Y1}
	 \bfY_1 = \bfU_1 \bfS_1 \bfV_1^\top \approx \bfY(t_1).
	 \end{equation}
	Then,  $\bfY_1$ is taken as the starting value for the next step, which computes $\bfY_2$ in factorized form, etc. 
	
The $m\times r$, $n\times r$ and $2r\times 2r$ matrix differential equations in the substeps are solved approximately using a standard integrator, e.g., an explicit or implicit Runge--Kutta method or an exponential integrator when $\bfF$ is predominantly linear.
%
%We note that the L-step equals the K-step for the transposed function $\bfG(t, \bfY)=\bfF(t,\bfY^\top)^\top$ and transposed starting values.
%Unlike the projector-splitting algorithm, the triangular factors of the QR-decompositions are not reused. 

The S-step is a Galerkin method for the differential equation \eqref{ode-mat} in the space of matrices $\wh\bfU \bfS \wh\bfV^\top$ generated by the extended basis matrices $\wh\bfU$ and $\wh\bfV$. 
Note that for $\bfY_0=\bfU_0\bfS_0\bfV_0^\top$, we have the projected starting value 
$\wh \bfU \wh \bfU^\top \bfY_0 \wh \bfV \wh \bfV^\top = \wh\bfU \wh \bfS(t_0) \wh \bfV^\top$.
The Galerkin step yields the rank-$2r$ approximation
\begin{equation}\label{whY1}
\wh \bfY_1 = \wh \bfU \wh \bfS(t_1) \wh \bfV^\top \approx \bfY(t_1).
\end{equation}

The above algorithm differs from the integrator in \cite{CeL21} in that the basis matrix $\wh \bfU$ not only contains an orthonormal basis of the range of $\bfK(t_1)$ but is extended to cover also the range of the initial basis $\bfU_0$, and this is done analogously for $\wh \bfV$. This extension allows us to increase the rank in a simple and effective way, while we retain the favourable properties of the integrator of  \cite{CeL21}.

\subsection{Exactness property and robust error bound}

	The new adaptive integrator shares very favourable properties with the low-rank matrix integrators of \cite{CeL21} and \cite{LubichOseledets}.
 First, it reproduces rank-$r$ matrices exactly.

	\begin{theorem}[Exactness]
		\label{thm:exact}
		Let $\bfA(t) \in \mathbb{R}^{m \times n}$ be of rank~$r$  for $t_0 \leq t \leq t_1$,
		so that $\bfA(t)$ has a factorization \eqref{USV}, $\bfA(t)=\bfU(t)\bfS(t)\bfV(t)^\top$. 
		Moreover, assume  that the $r\times r$ matrices  $\bfU(t_1)^\top \bfU(t_0)$ and $\bfV(t_1)^\top \bfV(t_0)$ are invertible. With $\bfY_0 = \bfA(t_0)$, the rank-adaptive integrator for $\bfF(t,\bfY)=\dt \bfA(t)$
		 is then exact: $ \bfY_1 = \bfA(t_1)$, provided that the truncation tolerance $\vartheta$ is smaller than the $r$-th singular value of $\bfA(t_1)$.
	\end{theorem}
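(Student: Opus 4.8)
The plan is to exploit the fact that for $\bfF(t,\bfY)=\dt\bfA(t)$ the right-hand side does not depend on $\bfY$, so every substep reduces to a trivial quadrature. First I would integrate the K-step: since $\dt{\bfK}(t)=\dt\bfA(t)\,\bfV_0$ with $\bfK(t_0)=\bfU_0\bfS_0$, direct integration together with $\bfA(t_0)\bfV_0=\bfU_0\bfS_0\bfV_0^\top\bfV_0=\bfU_0\bfS_0$ gives the telescoping identity $\bfK(t_1)=\bfA(t_1)\bfV_0$. Writing $\bfA(t_1)=\bfU(t_1)\bfS(t_1)\bfV(t_1)^\top$, this reads $\bfK(t_1)=\bfU(t_1)\bigl[\bfS(t_1)\bfV(t_1)^\top\bfV_0\bigr]$, and the bracketed $r\times r$ factor is invertible because $\bfS(t_1)$ is invertible (the rank of $\bfA(t_1)$ is $r$) and $\bfV(t_1)^\top\bfV_0$ is invertible by hypothesis. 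Hence the range of $\bfK(t_1)$ equals the range of $\bfU(t_1)$. The same computation on the L-step gives $\bfL(t_1)=\bfA(t_1)^\top\bfU_0$, and, using the invertibility of $\bfU(t_1)^\top\bfU_0$, the range of $\bfL(t_1)$ equals the range of $\bfV(t_1)$.

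Next I would record what this means for the augmented bases. Because $\wh\bfU$ is an orthonormal basis of the range of $(\bfK(t_1),\bfU_0)$, its columns span both the range of $\bfU(t_1)$ and the range of $\bfU_0$; consequently the orthogonal projector $\wh\bfU\wh\bfU^\top$ fixes $\bfU(t_1)$, that is, $\wh\bfU\wh\bfU^\top\bfU(t_1)=\bfU(t_1)$. Symmetrically $\wh\bfV\wh\bfV^\top\bfV(t_1)=\bfV(t_1)$. Establishing these two range inclusions is the only place where the two invertibility assumptions enter, and it is the real content of the argument; everything afterward is bookkeeping.

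For the S-step, again $\dt{\wh\bfS}(t)=\wh\bfU^\top\dt\bfA(t)\wh\bfV$ with starting value $\wh\bfS(t_0)=\wh\bfM\bfS_0\wh\bfN^\top=\wh\bfU^\top\bfA(t_0)\wh\bfV$, so quadrature yields $\wh\bfS(t_1)=\wh\bfU^\top\bfA(t_1)\wh\bfV$. The pre-truncation Galerkin approximation \eqref{whY1} is therefore
$$
\wh\bfY_1=\wh\bfU\wh\bfS(t_1)\wh\bfV^\top=\wh\bfU\wh\bfU^\top\bfA(t_1)\,\wh\bfV\wh\bfV^\top .
$$
Inserting $\bfA(t_1)=\bfU(t_1)\bfS(t_1)\bfV(t_1)^\top$ and applying the two projector identities from the previous step collapses both projectors, giving $\wh\bfY_1=\bfA(t_1)$ exactly.

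Finally I would handle the truncation. Since $\wh\bfU$ and $\wh\bfV$ have orthonormal columns and the ranges of $\bfA(t_1)$ and of $\bfA(t_1)^\top$ are captured by $\wh\bfU$ and $\wh\bfV$, the $2r\times 2r$ matrix $\wh\bfS(t_1)=\wh\bfU^\top\bfA(t_1)\wh\bfV$ has exactly the $r$ nonzero singular values of $\bfA(t_1)$ and $r$ vanishing ones. With $\vartheta$ below the $r$-th singular value $\sigma_r$, the truncation criterion selects $r_1=r$: discarding the $r$ zeros leaves a tail of size $0\le\vartheta$, whereas discarding $\sigma_r$ as well would leave a tail of at least $\sigma_r>\vartheta$. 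The reconstruction from the truncated SVD then reproduces $\wh\bfS(t_1)$, so that $\bfY_1=\wh\bfY_1=\bfA(t_1)$. I expect no genuine obstacle beyond the range-equality step; the remainder is a sequence of cancellations made possible by $\bfF$ being independent of $\bfY$.
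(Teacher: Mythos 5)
Your proof is correct and follows essentially the same route as the paper: quadrature of the K-, L- and S-steps, the projector identities $\wh\bfU\wh\bfU^\top\bfA(t_1)=\bfA(t_1)$ and $\bfA(t_1)\wh\bfV\wh\bfV^\top=\bfA(t_1)$, and the observation that truncating the rank-$r$ matrix $\wh\bfY_1$ with $\vartheta<\sigma_r$ changes nothing. The only difference is that where the paper cites Lemma~1 of \cite{CeL21} for the range equality of $\bfK(t_1)$ and $\bfA(t_1)$, you prove it inline via $\bfK(t_1)=\bfA(t_1)\bfV_0=\bfU(t_1)\bigl[\bfS(t_1)\bfV(t_1)^\top\bfV_0\bigr]$ and the invertibility hypothesis, and you spell out the truncation-index argument in more detail.
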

	
	\begin{proof} We show that the arguments of the exactness proof of \cite[Theorem~3]{CeL21} apply with small modifications. As $\wh \bfS(t_0) = \wh\bfU^\top \bfU_0 \bfS_0 \bfV_0^\top \wh\bfV= \wh\bfU^\top \bfA(t_0) \wh\bfV$,
	we have by~\eqref{S-step}
	$$
	\wh \bfS(t_1) = \wh \bfS(t_0) + \int_{t_0}^{t_1} \wh\bfU^\top  \dt \bfA(t)  \wh\bfV \,dt
	= \wh \bfS(t_0) + \wh\bfU^\top \bigl( \bfA(t_1)-\bfA(t_0)\bigr) \wh\bfV =  \wh\bfU^\top \bfA(t_1) \wh\bfV.
	$$
	We observe that we can choose $\wh\bfU = (\wt\bfU_1,\wt\bfU_0)$, where $\wt\bfU_1$ is the orthogonal factor in the QR-decomposition of $\bfK(t_1)$, and $\wt\bfU_0^\top \wt\bfU_1=0$ by orthogonality. Lemma~1 of \cite{CeL21} shows that $\wt\bfU_1$ and $\bfA(t_1)$ have the same range, or equivalently,
	$$
	\wt\bfU_1 \wt\bfU_1^\top \bfA(t_1) = \bfA(t_1).
	$$
We then also have
         \begin{equation}\label{UUA}
	\wh\bfU \wh\bfU^\top \bfA(t_1) = \bfA(t_1),
	\end{equation}
	because the above equation together with $\wt\bfU_0^\top \wt\bfU_1=0$ implies
	\begin{align*}
	\wh\bfU \wh\bfU^\top \bfA(t_1) &=  \wt\bfU_1 \wt\bfU_1^\top \bfA(t_1) + \wt\bfU_0 \wt\bfU_0^\top  \bfA(t_1) 
	\\
	&=  \bfA(t_1) + \wt\bfU_0 (\wt\bfU_0^\top \wt\bfU_1) \wt\bfU_1^\top \bfA(t_1)  = \bfA(t_1).	
	\end{align*}
We note that \eqref{UUA} still holds true for a different choice of orthonormal basis $\wh \bfU$, since $\wh\bfU \wh\bfU^\top$ is the orthogonal projection onto the range of $(\bfK(t_1),\bfU_0)$, which does not depend on the particular choice of the orthonormal basis.
In the same way we obtain
         \begin{equation}\label{VVA}
	 \bfA(t_1)  \wh\bfV \wh\bfV^\top= \bfA(t_1).
	\end{equation}
By \eqref{UUA} and \eqref{VVA} we then have 	
        $$
	\wh\bfU\wh \bfS(t_1)\wh\bfV^\top=  \wh\bfU \wh\bfU^\top \bfA(t_1) \wh\bfV \wh\bfV^\top = \bfA(t_1).
	$$
As this matrix is of rank $r$, its truncation to rank $r$ leaves the result unchanged, and so we obtain the stated exactness result.
\qed
	\end{proof}

More importantly, the algorithm is robust to the presence of small singular values of the solution or its approximation, as opposed to standard integrators applied to %\eqref{ode-mat-proj} or the equivalent 
the differential equations for the factors $\bfU(t)$, $\bfS(t)$, $\bfV(t)$, which contain a factor $\bfS(t)^{-1}$ on the right-hand sides \cite[Prop.\,2.1]{KochLubich07}. 
%Moreover, the local Lipschitz constant of the tangent space projection $\P(\cdot)$ is proportional to the inverse of the smallest nonzero singular value \cite[Lemma 4.2]{KochLubich07}.
	The appearance of small singular values is to be expected in an adaptive low-rank approximation, because the smallest singular value retained in the approximation cannot be expected to be much larger than the small truncation tolerance $\vartheta$.
	The following error bound is independent of small singular values. Such a robust error bound was first shown in \cite[Theorem 2.1]{KieriLubichWalach} for the projector-splitting operator of \cite{LubichOseledets} and subsequently, based on that result, in \cite[Theorem~4]{CeL21} for the integrator of \cite{CeL21}.
		
	\begin{theorem}[Robust error bound]
		\label{thm:robust}
		Let $\bfA(t)$ denote the solution of the matrix differential equation \eqref{ode-mat}. Assume that  the following conditions hold in the Frobenius norm $\|\cdot\|=\|\cdot\|_F$:
		\begin{enumerate}
			\item 
			$\bfF$ is Lipschitz-continuous and bounded: for all $\bfY, \widetilde{\bfY} \in \mathbb{R}^{m \times n}$ and $0\le t \le T$,
			$$ 
			%\exists L,B \in \mathbb{R}_+: \quad
			\| \bfF(t, \bfY) - \bfF(t, \widetilde{\bfY}) \| 
			\leq
			L \| \bfY - \widetilde{\bfY} \|,
			\qquad
			\| \bfF(t, \bfY) \| \leq B \ .
			$$
			
			\item
			The normal part of $\bfF(t, \bfY)$ is $\varepsilon$-small at rank $r_n$ for $\bfY$ near $\bfA(t_n)$ and~$t$ 
			near~$t_n$: With $\P_{r_n}(\bfY)$ denoting the orthogonal projection onto the tangent space of the manifold 
			$\calM_{r_n}$ of rank-$r_n$ matrices at $\bfY\in\calM_{r_n}$, it is assumed that
			$$
			\| (\bfI - \P_{r_n}(\bfY)) \bfF(t, \bfY) \| \le \eps
			$$
			%		$$ \bfF(t, \bfY) = M(t, \bfY) + R(t, \bfY) ,$$
			%		$$ 
			%			M(t, \bfY) \in \mathcal{T}_{\bfY} \mathcal{M}, 
			%			\quad
			%			\| R(t, \bfY) \| \leq \varepsilon	, 
			%		$$
			for all $\bfY \in \mathcal{M}$ in a neighbourhood of $\bfA(t_n)$ and $t$ near $t_n$.
			
			\item
			The error in the initial value is $\delta$-small:
			$$
			\| \bfY_0 - \bfA_0 \| \le \delta.
			$$
		\end{enumerate}	
		Let $\bfY_n$ denote the low-rank approximation to $\bfA(t_n)$ at $t_n=nh$ obtained after n steps of the adaptive integrator with step-size $h>0$.
		Then, the error satisfies for all $n$ with $t_n =  nh \leq T$
		$$ \| \bfY_n - \bfA(t_n) \| \leq c_0\delta + c_1 \varepsilon + c_2 h + c_3 n\vartheta,$$	
		where the constants $c_i$ only depend on $L, B,$ and $T$. In particular, the constants are independent of singular values of the exact or approximate solution. 
	\end{theorem}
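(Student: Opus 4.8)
The plan is to reduce the analysis of one step of the rank-adaptive integrator to the already-established robust error bound for the fixed-rank integrator of \cite{CeL21}, and then to close a standard Lady Windermere's fan argument over the $n$ steps, paying careful attention to the error contributed by the truncation at each step. The key structural observation is that \emph{before} the truncation, the rank-$2r$ approximation $\wh\bfY_1 = \wh\bfU\,\wh\bfS(t_1)\,\wh\bfV^\top$ produced by the augmented Galerkin step is exactly of the form analysed in \cite{CeL21}: the augmented bases $\wh\bfU,\wh\bfV$ are built from the $\bfK$- and $\bfL$-steps precisely as in the fixed-rank integrator (now at rank $2r$), and the $\bfS$-step \eqref{S-step} is the same Galerkin update with the initial value $\wh\bfS(t_0)=\wh\bfU^\top\bfY_0\wh\bfV$, i.e.\ the orthogonal projection of $\bfY_0$ onto the augmented subspace. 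Hence \cite[Theorem~4]{CeL21}, applied at rank $2r$ with the same constants $L,B$ and the same $\eps$-smallness hypothesis (assumption~2, which bounds the normal part at rank $r_n$ and therefore a fortiori controls the relevant defect for the augmented scheme), yields a local bound of the form $\|\wh\bfY_1 - \bfA(t_1)\| \le \hat c_0\|\bfY_0-\bfA(t_0)\| + \hat c_1\eps h + \hat c_2 h^2$ over one step, with constants depending only on $L,B$.

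Next I would account for the truncation. By the choice of the new rank $r_1$ in the \textbf{Truncation} step, the truncated iterate $\bfY_1$ differs from $\wh\bfY_1$ by exactly the discarded tail of the singular values,
$$
\|\bfY_1 - \wh\bfY_1\| = \Bigl(\sum_{j=r_1+1}^{2r}\sigma_j^2\Bigr)^{1/2} \le \vartheta,
$$
since $\wh\bfU$ and $\wh\bfV$ have orthonormal columns, so the Frobenius norm of $\wh\bfU(\wh\bfS(t_1)-\bfP_1\bfS_1\bfQ_1^\top)\wh\bfV^\top$ equals the Frobenius norm of the truncated part of $\wh\bfS(t_1)$. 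Combining this with the local bound by the triangle inequality gives a one-step estimate
$$
\|\bfY_1 - \bfA(t_1)\| \le \hat c_0\|\bfY_0-\bfA(t_0)\| + \hat c_1\eps h + \hat c_2 h^2 + \vartheta,
$$
where crucially the constant in front of $\|\bfY_0-\bfA(t_0)\|$ is of the form $1+O(h)$ (this is what makes the fan argument work and what guarantees independence of singular values).

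I would then run the standard recursion over $n$ steps. Writing $e_n=\|\bfY_n-\bfA(t_n)\|$, the one-step bound gives $e_{n+1}\le (1+Ch)e_n + \hat c_1\eps h + \hat c_2 h^2 + \vartheta$ with $e_0\le\delta$. Solving this linear recursion, the propagation of the initial error yields the $c_0\delta$ term, the accumulated $\eps h$ terms sum via the discrete Gronwall factor to $c_1\eps$, the $h^2$ terms sum to $c_2 h$, and — since the truncation contributes a fresh $\vartheta$ at every step and the amplification factors are bounded on $[0,T]$ — the truncation terms accumulate to $c_3 n\vartheta$, exactly matching the stated error bound with constants $c_i$ depending only on $L,B,T$. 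The \emph{main obstacle} is establishing the local bound with an amplification factor $1+O(h)$ that is genuinely independent of the (possibly tiny) retained singular values; this cannot be obtained naively from the differential equations for the factors $\bfU,\bfS,\bfV$ because of the $\bfS^{-1}$ terms, and it is precisely here that one must invoke the robust local analysis of \cite{CeL21} rather than a direct perturbation estimate. Verifying that the augmentation step fits the hypotheses of \cite[Theorem~4]{CeL21} at rank $2r$ — in particular that assumption~2 at rank $r_n$ suffices to control the Galerkin defect in the augmented subspace — is the technical heart of the argument.
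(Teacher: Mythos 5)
Your overall architecture is exactly what the paper intends: the paper in fact omits the proof, stating only that it ``follows the lines of the proof of \cite[Theorem~4]{CeL21} (with modifications of the same type as in the proof of Theorem~\ref{thm:exact})''. Your three ingredients --- a one-step bound for the non-truncated result $\wh\bfY_1$ with amplification factor $1+O(h)$, the exact accounting of the truncation via $\|\bfY_1-\wh\bfY_1\|=\bigl(\sum_{j>r_1}\sigma_j^2\bigr)^{1/2}\le\vartheta$, and the discrete Gronwall (Lady Windermere's fan) recursion producing $c_0\delta+c_1\eps+c_2h+c_3n\vartheta$ --- are the correct skeleton, and the truncation and accumulation steps are carried out correctly.

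The gap is in the pivot of your reduction. The augmented step is \emph{not} ``precisely the fixed-rank integrator at rank $2r$'', so \cite[Theorem~4]{CeL21} cannot be invoked as a black box: the fixed-rank integrator at rank $2r$ would start from a rank-$2r$ initial value and integrate K- and L-equations of width $2r$, whereas here the K- and L-steps have width $r$ and $\wh\bfU$, $\wh\bfV$ are obtained by adjoining the \emph{old} bases $\bfU_0$, $\bfV_0$ to $\bfK(t_1)$, $\bfL(t_1)$. Moreover, \cite[Theorem~4]{CeL21} is a global $n$-step bound; the one-step estimate with factor $1+O(h)$ that your recursion needs lives inside its proof, not in its statement. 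Finally, your ``a fortiori'' claim about assumption~2 does not follow from nesting of tangent spaces: the tangent space of $\calM_{r}$ at a rank-$r$ point is not contained in the subspace $\{\wh\bfU\bfS\wh\bfV^\top\}$. What actually repairs the argument --- and this is the ``modification'' the paper alludes to, mirroring the proof of Theorem~\ref{thm:exact} --- is a range-containment observation: every estimate in the proof of \cite[Theorem~4]{CeL21} that uses the orthogonal projection onto the range of the QR factor $\wt\bfU_1$ of $\bfK(t_1)$ remains valid, with the defect terms only decreasing, when that projection is replaced by $\wh\bfU\wh\bfU^\top$, since $\mathrm{range}(\wt\bfU_1)\subseteq\mathrm{range}(\wh\bfU)$ implies $\|(\bfI-\wh\bfU\wh\bfU^\top)\bfZ\|\le\|(\bfI-\wt\bfU_1\wt\bfU_1^\top)\bfZ\|$ for all $\bfZ$ (analogously for $\wh\bfV$); in addition, by Lemma~\ref{lem:init} the S-step now starts exactly from $\bfY_0$ rather than from a projection of it, which removes one error term present in the fixed-rank analysis rather than adding one. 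With the local bound re-derived in this way, the rest of your proposal goes through and yields the stated bound with constants depending only on $L$, $B$, $T$.
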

	
	The proof of this error bound follows the lines of the proof of \cite[Theorem~4]{CeL21} (with modifications of the same type as in the proof of Theorem~\ref{thm:exact}) and  is therefore omitted. The result is not fully satisfactory, as it does not show that the error improves when the truncation tolerance is made smaller, as is observed in numerical experiments. To show this, a proportionality relation between $\eps$ and $\vartheta$ would be needed, which is not available to us. Some of this effect becomes qualitatively plausible by noting that decreasing $\vartheta$ increases the rank, which decreases $\varepsilon$.
	
	 As in \cite[Section 2.6.3]{KieriLubichWalach},  an inexact solution of the matrix differential equations in the rank-adaptive integrator
	leads to an additional error that is bounded in terms of the local errors in the inexact substeps, again with constants that do not depend on small singular values.

 	\subsection{Symmetric and skew-symmetric low-rank matrices}
	We now assume that the right-hand side function in \eqref{ode-mat} is such that one of the following conditions holds,
	
	\begin{equation} \label{F-sym}
%		\text{$\bfF(t,\bfY)$ is (skew-)symmetric whenever $\bfY$ is (skew-)symmetric.}
		\bfF(t,\bfY)^\top =  \bfF(t, \bfY^\top) \qquad \text{for all }\ \bfY \in \R^{n \times n}
	\end{equation}
	or
	\begin{equation} \label{F-skewsym}
	\bfF(t,\bfY)^\top =  -\bfF(t, -\bfY^\top) \qquad \text{for all }\ \bfY \in \R^{n \times n}.
	\end{equation}
	Under these conditions, solutions to \eqref{ode-mat} with symmetric or skew-symmetric initial data remain symmetric or skew-symmetric, respectively, for all times.
	%We also have preservation of (skew-)symmetry for the new integrator, which does not hold for the projector-splitting integrator.
	
	\begin{theorem}
		Let $\bfY_0 = \bfU_0 \bfS_0 \bfU_0^\top \in \R^{n \times n}$ be  symmetric or skew-symmetric and assume that the function $\bfF$ satisfies property~$(\ref{F-sym})$ or $(\ref{F-skewsym})$, respectively.
		Then, the approximation $\bfY_1$ obtained after one time step of the new integrator is symmetric or skew-symmetric, respectively.
	\end{theorem}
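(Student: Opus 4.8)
The plan is to propagate the (skew-)symmetric structure through each substep, exploiting at every stage that the defining property \eqref{F-sym} or \eqref{F-skewsym} of $\bfF$ intertwines transposition with the flow, and then to check that the concluding SVD truncation does not destroy the structure. Throughout I would write the initial datum as $\bfY_0=\bfU_0\bfS_0\bfU_0^\top$, i.e.\ take $\bfV_0=\bfU_0$, with $\bfS_0^\top=\bfS_0$ in the symmetric case and $\bfS_0^\top=-\bfS_0$ in the skew-symmetric case.

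First I would relate the K- and L-substeps. Comparing initial values gives $\bfL(t_0)=\bfU_0\bfS_0^\top=\pm\,\bfU_0\bfS_0=\pm\,\bfK(t_0)$, the sign matching the symmetric/skew-symmetric case. I would then verify that $\pm\,\bfK(t)$ actually solves the L-equation \eqref{L-step}: with $\bfZ=\bfK(t)\bfU_0^\top$ the L-step right-hand side evaluated at $\bfL=\pm\bfK$ equals $\bfF(t,\pm\bfZ^\top)^\top\bfU_0$, and the assumed property of $\bfF$ yields precisely $\bfF(t,\bfZ^\top)^\top=\bfF(t,\bfZ)$ in the symmetric case and $\bfF(t,-\bfZ^\top)^\top=-\bfF(t,\bfZ)$ in the skew-symmetric case, which matches $\pm\,\dt\bfK$. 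By uniqueness of the ODE solution, $\bfL(t)\equiv\pm\,\bfK(t)$. Hence the columns of $(\bfL(t_1),\bfV_0)$ and $(\bfK(t_1),\bfU_0)$ span the same subspace, so I may take $\wh\bfV=\wh\bfU$, and consequently $\wh\bfN=\wh\bfV^\top\bfV_0=\wh\bfU^\top\bfU_0=\wh\bfM$.

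Next I would show the S-step preserves the structure. With $\wh\bfV=\wh\bfU$ and $\wh\bfN=\wh\bfM$, the initial value $\wh\bfS(t_0)=\wh\bfM\bfS_0\wh\bfM^\top$ is symmetric (resp.\ skew-symmetric) because $\bfS_0$ is. To propagate this, transpose \eqref{S-step} and apply the property of $\bfF$ to the argument $\wh\bfU\wh\bfS(t)\wh\bfU^\top$: in the symmetric case this shows that $\wh\bfS(t)^\top$ solves the same equation as $\wh\bfS(t)$, and in the skew-symmetric case the same holds for $-\wh\bfS(t)^\top$ (here one uses $-\bigl(\wh\bfU\wh\bfS\wh\bfU^\top\bigr)^\top=\wh\bfU(-\wh\bfS^\top)\wh\bfU^\top$). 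Since the relevant initial value coincides with $\wh\bfS(t_0)$, uniqueness gives $\wh\bfS(t_1)^\top=\pm\,\wh\bfS(t_1)$.

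Finally the truncation, which is where I expect the main obstacle. In the symmetric case I would use the real eigendecomposition $\wh\bfS(t_1)=\bfW\bfLambda\bfW^\top$, which furnishes an SVD with $\sigma_j=|\lambda_j|$; the rank-$r_1$ truncation is the truncated eigendecomposition $\bfW_1\bfLambda_1\bfW_1^\top$, still symmetric, so $\bfY_1=\wh\bfU\bfW_1\bfLambda_1(\wh\bfU\bfW_1)^\top$ is symmetric. The delicate point is the skew-symmetric case: the nonzero singular values of a real skew-symmetric matrix occur in equal pairs (visible from its $2\times2$-block canonical form $\wh\bfS(t_1)=\bfW\bfB\bfW^\top$), and splitting a pair during truncation destroys skew-symmetry. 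I would resolve this by truncating whole blocks, equivalently by choosing the retained rank $r_1$ at a pair boundary; since enlarging $r_1$ only decreases the discarded tail, the tolerance $\bigl(\sum_{j>r_1}\sigma_j^2\bigr)^{1/2}\le\vartheta$ is still satisfied, and the retained part is skew-symmetric, giving $\bfY_1=\wh\bfU\,\bfC\,\wh\bfU^\top$ with $\bfC^\top=-\bfC$.
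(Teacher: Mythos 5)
Your proof is correct, and its core is the same argument the paper relies on: the paper's ``proof'' is merely a citation of \cite[Theorem~5]{CeL21}, whose content is precisely your first two steps --- by uniqueness of ODE solutions $\bfL(t)=\pm\bfK(t)$, hence one may choose $\wh\bfV=\wh\bfU$ and $\wh\bfN=\wh\bfM$, after which $\wh\bfS(t)$ inherits symmetry (resp.\ skew-symmetry) from its initial value $\wh\bfM\bfS_0\wh\bfM^\top$, again by uniqueness. Where you go beyond the paper is the truncation step, which does not exist in the fixed-rank integrator of \cite{CeL21} and which the paper's proof-by-reference silently ignores; your handling of it is the genuinely valuable part. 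In the symmetric case you choose the SVD induced by the eigendecomposition, which is indeed necessary when eigenvalues $+\lambda$ and $-\lambda$ tie in modulus (otherwise a truncation splitting that tie need not be symmetric). In the skew-symmetric case your concern is not pedantry but a real issue: a real skew-symmetric matrix has even rank, so if the algorithm's prescribed \emph{minimal} $r_1$ comes out odd --- which can happen, e.g.\ for $\wh\bfS(t_1)=\bigl(\begin{smallmatrix}0&1\\-1&0\end{smallmatrix}\bigr)$ with $\vartheta=1$ the minimal rank is $r_1=1$ --- then \emph{no} choice of SVD makes $\bfY_1$ skew-symmetric. Your remedy of rounding $r_1$ up to a pair boundary is harmless for the error bound (enlarging $r_1$ only shrinks the discarded tail) but is strictly speaking a modification of the truncation rule as stated in Section~\ref{subsec:alg-newint}; it is needed for the theorem to hold verbatim in the skew-symmetric case, a caveat that neither the paper nor the cited proof of \cite{CeL21} addresses.
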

	
	The proof is the same as in \cite[Theorem~5]{CeL21}.
	
\section{Structure preservation up to the truncation tolerance}
\label{sec:structure}
	
\subsection{Starting value of the Galerkin step}

The following relation, which is not satisfied for the integrator of \cite{CeL21}, will be essential in the following subsections. Here we use the notation of
\eqref{S-step}.

\begin{lemma}\label{lem:init}
Let  $\bfY_0=\bfU_0\bfS_0\bfV_0^\top$ and $\wh\bfY_0=\wh\bfU\wh\bfS(t_0)\bf\wh\bfV^\top$. Then, 
$\wh\bfY_0= \bfY_0$.
\end{lemma}

\begin{proof}
We note that by the definition of $\wh\bfS(t_0)$, we have
$
\wh\bfY_0  = \wh\bfU \wh\bfU^\top \bfU_0 \bfS_0 \bfV_0^\top \wh\bfV \wh\bfV^\top.
$
Here, $\wh\bfU \wh\bfU^\top$ is the orthogonal projection onto the range of $\wh\bfU$, which by definition equals the range of $(\bfK(t_1),\bfU_0)$. In particular, the columns of $\bfU_0$ are in the range of $\wh\bfU$, and hence $\wh\bfU \wh\bfU^\top\bfU_0=\bfU_0$.
In the same way we also have $\wh\bfV \wh\bfV^\top \bfV_0= \bfV_0$. So we obtain $\wh\bfY_0= \bfY_0$.
\qed
\end{proof}

\subsection{Norm preservation}
We now turn to a near-conservation property that is not satisfied with the integrator of \cite{CeL21}.
If the function $\bfF$ satisfies
\begin{equation}\label{F-cons}
\langle \bfY, \bfF(t,\bfY) \rangle = 0 \qquad\text{for all } \bfY \in \R^{m\times n} \text{ and all } t,
\end{equation}
then solutions of \eqref{ode-mat} preserve the Frobenius norm, i.e. $\| \bfA(t) \|= \| \bfA(0) \|$ for all $t$. For the proposed integrator, we show that each step preserves the norm up to the truncation tolerance $\vartheta$, independently of the stepsize $h$.

\begin{theorem}\label{thm:norm}
If $\bfF$ satisfies \eqref{F-cons}, then the numerical result $\bfY_1$ obtained after a step of the adaptive integrator with the truncation tolerance $\vartheta$ satisfies
$$
\bigl| \| \bfY_1 \| -  \| \bfY_0 \| \bigr| \le \vartheta.
$$
\end{theorem}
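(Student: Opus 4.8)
The plan is to track the Frobenius norm through the three stages of one step---the Galerkin S-step, and the truncation---while exploiting the norm-preservation property \eqref{F-cons} at the level of the augmented rank-$2r$ approximation $\wh\bfY_1 = \wh\bfU\wh\bfS(t_1)\wh\bfV^\top$. The key observation is that because $\wh\bfU$ and $\wh\bfV$ have orthonormal columns, $\|\wh\bfY_1\| = \|\wh\bfS(t_1)\|$, and likewise the truncation in step~3 is an SVD truncation of $\wh\bfS(t_1)$, so the error it introduces in the norm is controlled directly by the tolerance $\vartheta$. The strategy is therefore to establish two facts: first, that the Galerkin step exactly preserves the norm, $\|\wh\bfY_1\| = \|\wh\bfY_0\| = \|\bfY_0\|$; and second, that truncation changes the norm by at most $\vartheta$.

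For the first fact I would show that $\tfrac{d}{dt}\|\wh\bfS(t)\|^2 = 0$ along the S-step \eqref{S-step}. Writing $\|\wh\bfS(t)\|^2 = \langle \wh\bfS(t), \wh\bfS(t)\rangle$ and differentiating, one gets $2\langle \wh\bfS(t), \dt{\wh\bfS}(t)\rangle = 2\langle \wh\bfS(t), \wh\bfU^\top \bfF(t, \wh\bfU\wh\bfS(t)\wh\bfV^\top)\wh\bfV\rangle$. Using the adjoint relation for the Frobenius inner product, this equals $2\langle \wh\bfU\wh\bfS(t)\wh\bfV^\top,\, \bfF(t, \wh\bfU\wh\bfS(t)\wh\bfV^\top)\rangle$, which is precisely $2\langle \bfY, \bfF(t,\bfY)\rangle$ with $\bfY = \wh\bfU\wh\bfS(t)\wh\bfV^\top$, and hence vanishes by \eqref{F-cons}. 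Thus $\|\wh\bfS(t_1)\| = \|\wh\bfS(t_0)\|$. Combining with $\|\wh\bfY_1\| = \|\wh\bfS(t_1)\|$, $\|\wh\bfY_0\| = \|\wh\bfS(t_0)\|$ (orthonormality of the bases), and Lemma~\ref{lem:init} which gives $\wh\bfY_0 = \bfY_0$, I obtain $\|\wh\bfY_1\| = \|\bfY_0\|$.

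For the second fact, the truncated matrix is $\bfS_1 = \diag(\sigma_1,\dots,\sigma_{r_1})$ and the discarded singular values satisfy $\bigl(\sum_{j=r_1+1}^{2r}\sigma_j^2\bigr)^{1/2}\le\vartheta$ by the truncation criterion in step~3. Since $\|\bfY_1\| = \|\bfS_1\| = \bigl(\sum_{j=1}^{r_1}\sigma_j^2\bigr)^{1/2}$ and $\|\wh\bfY_1\| = \|\wh\bfS(t_1)\| = \bigl(\sum_{j=1}^{2r}\sigma_j^2\bigr)^{1/2}$, the reverse triangle inequality for the $\ell^2$-norm of the singular-value vector gives $\bigl|\|\bfY_1\| - \|\wh\bfY_1\|\bigr| \le \bigl(\sum_{j=r_1+1}^{2r}\sigma_j^2\bigr)^{1/2}\le\vartheta$. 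Chaining this with $\|\wh\bfY_1\| = \|\bfY_0\|$ yields the claim.

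The main obstacle, such as it is, lies in justifying the exact norm conservation of the S-step at the \emph{continuous} level: this requires that the reduced differential equation \eqref{S-step} be solved exactly, whereas in practice it is integrated by a standard method. I would note that the theorem as stated concerns the idealized step, and that the inner-product computation is clean precisely because the Galerkin projection onto the fixed augmented bases commutes with taking the inner product---the bases $\wh\bfU,\wh\bfV$ do not vary in $t$ during the S-step. The remaining identities ($\|\wh\bfU\bfX\wh\bfV^\top\| = \|\bfX\|$ for orthonormal $\wh\bfU,\wh\bfV$, and the adjoint manipulation of the Frobenius inner product) are routine and I would state them without belabored computation.
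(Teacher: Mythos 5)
Your proposal is correct and follows essentially the same route as the paper's own proof: exact norm conservation of the Galerkin S-step via $\tfrac{d}{dt}\|\wh\bfS(t)\|^2 = 2\langle \wh\bfU\wh\bfS\wh\bfV^\top, \bfF(t,\wh\bfU\wh\bfS\wh\bfV^\top)\rangle = 0$, identification of $\|\wh\bfS(t_0)\|$ with $\|\bfY_0\|$ through Lemma~\ref{lem:init}, and a truncation error of at most $\vartheta$ (the paper bounds it by $\|\bfY_1-\wh\bfY_1\|\le\vartheta$, which is equivalent to your reverse-triangle-inequality argument on the discarded singular values). No gaps; your closing remark about the S-step being solved exactly matches the paper's implicit idealization.
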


\begin{proof}
We show that the non-truncated result $\wh\bfY_1$ of \eqref{whY1} has the same norm as $\bfY_0$. The stated bound then follows from
$\bigl| \| \bfY_1 \| -  \| \wh\bfY_1 \| \bigr| \le \| \bfY_1 - \wh\bfY_1\| \le \vartheta$.
We begin by noting that $\|\wh \bfY_1\| = \| \wh\bfS(t_1) \|$ and $\|\bfY_0\| = \| \bfS_0 \|$, and further (omitting the argument $t$ after the first equality)
$$
\frac12 \,\frac{d}{dt} \, \| \wh \bfS(t) \|^2 = \langle \wh\bfS, \dt{\wh\bfS} \rangle = 
\langle \wh\bfS, \wh\bfU^\top \bfF(t, \wh\bfU \wh\bfS \wh\bfV^\top) \wh\bfV \rangle =
\langle \wh\bfU\wh\bfS\wh\bfV^\top\!,  \bfF(t, \wh\bfU \wh\bfS \wh\bfV^\top) \rangle = 0,
$$
where we used \eqref{F-cons} in the last equality. This yields $\| \wh\bfS(t_1)\| = \| \wh\bfS(t_0)\|$. Furthermore, we have 
$\|\wh\bfS(t_0)\| = \|\wh\bfY_0\|$, which by Lemma~\ref{lem:init} equals $\| \bfY_0\|$.

%By construction,
%$\wh\bfS(t_0) = \wh\bfM \bfS_0 \wh\bfN^\top$. We recall that 
%$\wh\bfM=\wh\bfU^\top \bfU_0$ and note that $\wh\bfM^\top \wh\bfM= \bfU_0^\top \wh\bfU\wh\bfU^\top \bfU_0$ is an orthogonal projection of rank $r$ from $\R^r$ to $\R^r$ and hence equals the identity matrix. This shows that $\wh\bfM$ has orthonormal columns, and by the same argument this holds true also for $\wh\bfN$. So we obtain $\|\wh\bfS(t_0)\|=\|\bfS_0\|$. 

Altogether, we then have
$$
\|\wh \bfY_1\| = \| \wh\bfS(t_1) \|= \| \wh\bfS(t_0) \|=\|\wh\bfY_0\| = \|\bfY_0\|,
$$
which yields the result.
%It remains to show the last but one equality.
%Writing again $\wh\bfU=(\wt\bfU_1,\wt\bfU_0)$ as the reduced orthogonal factor of the QR decomposition of $(\bfK(t_1),\bfU_0)$, we note that there are $r\times r$ matrices $\bfG$ and $\bfH$ such that
%$$
%\wt\bfU_0 = \bfU_0 \bfG - \wt\bfU_1\bfH.
%$$
%By  orthonormality this yields
%$$
%0 = \wt \bfU_1^\top \wt \bfU_0 = \wt \bfU_1^\top \bfU_0 \bfG - \bfH,
%$$
%so that 
%$$
%\wt \bfU_1^\top \bfU_0 = \bfH\bfG^{-1}.
%$$
%We then have
%$$
%\wt \bfU_0^\top \bfU_0 = \bfG^\top - \bfH^\top \wt \bfU_1^\top \bfU_0 = (\bfG^\top\bfG -\bfH^\top \bfH) \bfG^{-1}.
%$$
%Again by orthonormality,
%$$
%\bfI = \wt \bfU_0^\top \wt \bfU_0 = \wt \bfU_0^\top \bfU_0\bfG = \bfG^\top\bfG -\bfH^\top \bfH
%$$
%so that finally
%$$
%\wt \bfU_0^\top \bfU_0 = \bfG^{-1}.
%$$
%This yields
%$$
%\wh\bfM = \wh\bfU^\top \bfU_0 = 
%\begin{pmatrix}
%\wt \bfU_1^\top \bfU_0 = \bfH\bfG^{-1} \\ \wt \bfU_0^\top \bfU_0
%\end{pmatrix}
%=
%\begin{pmatrix}
%\bfH\bfG^{-1} \\ \bfG^{-1}
%\end{pmatrix}.
%$$
%So we obtain
%\begin{align*}
%\wh\bfM^\top \wh\bfM &= \bfG^{-\top} \bfH^\top  \bfH\bfG^{-1} + \bfG^{-\top}\bfG^{-1} 
%\\
%&=  \bfG^{-\top} (\bfG^\top\bfG - \bfI)\bfG^{-1} + \bfG^{-\top}\bfG^{-1} \\
%& = \bfI - \bfG^{-\top}\bfG^{-1}+\bfG^{-\top}\bfG^{-1}= \bfI,
%\end{align*}
%which shows that $\wh\bfM$ has orthonormal columns. In the same way it is shown that also $\wh\bfN$ has orthonormal columns.
%Hence we have $\| \wh\bfS(t_0)\| = \| \wh\bfM \bfS_0 \wh\bfN^\top \| =\|\bfS_0\|$, which completes the proof.
\qed
\end{proof}
%
%The norm of solutions of the matrix differential equation \eqref{ode-mat} is non-increasing if $\langle \bfY, \bfF(t,\bfY) \rangle \le 0$. For the integrator we then obtain  $\| \bfY_1 \| \le  \| \bfY_0 \| +\vartheta$ by the same proof.

\subsection{Gradient systems}

Consider a function $f:\R^{m\times n} \to \R$ that is to be minimized. Along every path $\bfA(t)$ of matrices, we have
$$
\frac{d}{dt} f(\bfA(t)) = \langle \bfG(\bfA(t)), \dt \bfA(t) \rangle,
$$
where $\langle\bfA,\bfB\rangle=\sum_{i,j} a_{ij}b_{ij}$ denotes the inner product that induces the Frobenius norm $\|\cdot\|=\|\cdot\|_F$,
and $\bfG(\bfA)=\bfnabla f(\bfA)\in\R^{m\times n}$ is the gradient. Clearly, $f$ decreases monotonically if $\bfA(t)$ is a solution of the gradient system
$$
\dt \bfA(t) = - \bfG(\bfA(t)).
$$
When we use the rank-adaptive dynamical low-rank integrator on the gradient system, then the function $f$ decreases along the low-rank approximations $\bfY_n$, up to terms of the order of the truncation tolerance $\vartheta$ times the gradient norm and errors made in the numerical integration of the $S$-step \eqref{S-step}. More precisely, we show the following.

\begin{theorem}\label{thm:grad}
The result $\bfY_1$ obtained after a step of the rank-adaptive integrator with the truncation tolerance $\vartheta$ applied to the gradient system for the function $f$ satisfies for some $\alpha,\beta\ge 0$
$$
f(\bfY_1)\le f(\bfY_0) - \alpha^2 h + \beta \vartheta.
$$
For $f$ with a Lipschitz gradient, we have $\alpha =  \| \wh\bfU^\top  \bfG(\bfY_0)  \wh\bfV \| + O(h) $ and 
$\beta= \| \bfG(\bfY_1) \| + O(\vartheta)$.
\end{theorem}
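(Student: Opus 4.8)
The plan is to exploit the Galerkin structure of the S-step together with Lemma~\ref{lem:init}, and to treat the final truncation as a small perturbation controlled by the Lipschitz gradient. For the gradient system we have $\bfF(t,\bfY)=-\bfG(\bfY)$. First I would track $f$ along the non-truncated trajectory $\bfZ(t)=\wh\bfU\wh\bfS(t)\wh\bfV^\top$ produced by the S-step \eqref{S-step}. Differentiating and inserting \eqref{S-step}, then using that $\wh\bfU\wh\bfU^\top$ and $\wh\bfV\wh\bfV^\top$ are orthogonal projections, gives
$$
\frac{d}{dt} f(\bfZ(t)) = \langle \bfG(\bfZ(t)), \wh\bfU\dt{\wh\bfS}(t)\wh\bfV^\top\rangle = -\,\|\wh\bfU^\top \bfG(\bfZ(t))\wh\bfV\|^2 \le 0.
$$
The algebraic heart of this is the identity $\langle \bfG, \wh\bfU\wh\bfU^\top\bfG\wh\bfV\wh\bfV^\top\rangle = \|\wh\bfU^\top\bfG\wh\bfV\|^2$, i.e. the Galerkin cancellation that forces $f$ to decrease monotonically on the augmented subspace spanned by $\wh\bfU$ and $\wh\bfV$.

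Second, I would integrate this over the step and invoke Lemma~\ref{lem:init}, which gives $\bfZ(t_0)=\wh\bfY_0=\bfY_0$, to obtain the exact identity
$$
f(\wh\bfY_1) = f(\bfY_0) - \int_{t_0}^{t_1}\|\wh\bfU^\top\bfG(\bfZ(t))\wh\bfV\|^2\,dt = f(\bfY_0) - \alpha^2 h,
$$
where $\alpha^2$ is the time-average of the integrand over $[t_0,t_1]$. Since $\bfG$ is bounded along the trajectory, $\dt{\wh\bfS}$ is bounded and $\bfZ(t)$ stays within $O(h)$ of $\bfZ(t_0)=\bfY_0$ on the step; Lipschitz continuity of $\bfG$ then yields $\|\wh\bfU^\top\bfG(\bfZ(t))\wh\bfV\| = \|\wh\bfU^\top\bfG(\bfY_0)\wh\bfV\| + O(h)$ uniformly in $t$, whence $\alpha = \|\wh\bfU^\top\bfG(\bfY_0)\wh\bfV\| + O(h)$ as claimed.

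Third, for the truncation from $\wh\bfY_1$ to $\bfY_1$ I would use the fundamental theorem of calculus, $f(\bfY_1) - f(\wh\bfY_1) = \int_0^1 \langle \bfG(\wh\bfY_1 + s(\bfY_1 - \wh\bfY_1)), \bfY_1 - \wh\bfY_1\rangle\,ds$, together with $\|\bfY_1 - \wh\bfY_1\| \le \vartheta$. The Lipschitz bound on $\bfG$ gives $\|\bfG(\wh\bfY_1 + s(\bfY_1 - \wh\bfY_1))\| \le \|\bfG(\bfY_1)\| + O(\vartheta)$, so that $f(\bfY_1) - f(\wh\bfY_1) \le \beta\vartheta$ with $\beta = \|\bfG(\bfY_1)\| + O(\vartheta)$. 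Adding this to the previous identity delivers the stated bound $f(\bfY_1)\le f(\bfY_0) - \alpha^2 h + \beta\vartheta$.

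The main obstacle I anticipate is not the monotonicity identity, which is essentially forced by the Galerkin form of the S-step, but the bookkeeping that turns the exact expressions into the leading-order characterizations of $\alpha$ and $\beta$: one must control the drift of $\bfZ(t)$ away from $\bfY_0$ over a step, and absorb the difference between $\bfG(\wh\bfY_1)$ and $\bfG(\bfY_1)$ into the $O(\vartheta)$ remainder, using only boundedness and Lipschitz continuity. Care is needed to keep all constants independent of the small singular values, consistent with the robustness philosophy underlying Theorem~\ref{thm:robust}.
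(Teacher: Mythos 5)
Your proposal is correct and follows essentially the same route as the paper's proof: differentiate $f$ along the non-truncated S-step trajectory $\wh\bfU\wh\bfS(t)\wh\bfV^\top$ to get the Galerkin cancellation $\frac{d}{dt}f = -\|\wh\bfU^\top\bfG\wh\bfV\|^2$, invoke Lemma~\ref{lem:init} for the starting value, and bound the truncation error via $\|\bfY_1-\wh\bfY_1\|\le\vartheta$ and a mean-value argument. The only cosmetic difference is that you define $\alpha^2$ as the time-average of the squared projected gradient norm (yielding an equality before truncation), while the paper takes $\alpha$ as the minimum over the step; both are valid and your extra Lipschitz bookkeeping for the $O(h)$ and $O(\vartheta)$ terms matches what the paper leaves implicit.
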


\begin{proof} Along the solution of the differential equation \eqref{S-step} with $\bfF(\bfY)=-\bfG(\bfY)$ we have, with $\wh\bfY(t) = \wh\bfU \wh \bfS(t) \wh\bfV^\top$,
\begin{align*}
\frac{d}{dt}\, f(\wh\bfY(t) ) &= \langle \bfG(\bfY(t)),  \wh\bfU \dt{\wh \bfS}(t) \wh\bfV^\top \rangle =
 \langle \wh\bfU^\top \bfG(\bfY(t))\wh\bfV, \dt{\wh\bfS}(t) \rangle 
 \\ &= \langle \wh\bfU^\top \bfG(\bfY(t))\wh\bfV, - \wh\bfU^\top \bfG(\bfY(t))\wh\bfV \rangle = -  \|  \wh\bfU^\top \bfG(\bfY(t))\wh\bfV \|^2  \le -\alpha^2
\end{align*}
with $\alpha = \min_{0\le\tau\le 1} \| \wh\bfU^\top  \bfG( \bfY(t_0+\tau h)  \wh\bfV \| $. Since $\wh\bfY(t_0)=\bfY_0$ 
by Lemma~\ref{lem:init} and $\wh\bfY_1=\wh\bfY(t_1)$, we obtain
$$
f(\wh\bfY_1) \le f(\wh\bfY_0) -\alpha^2 h.
$$
Since the truncation is such that $\|\bfY_1 - \wh\bfY_1\| \le \vartheta$, we have
$$
f(\bfY_1) \le f(\wh\bfY_1) + \beta \vartheta
$$
with
$\beta = \max_{0\le\tau\le 1} \| \bfG(\tau \bfY_1 + (1-\tau)\wh\bfY_1) \|$, and so we obtain the stated result.
\qed
\end{proof}

The above result does not include the error made in solving the differential equation \eqref{S-step} for $\bfS$ only approximately. If a step  with the implicit Euler method or a discrete gradient method is made, then $f$ still decreases along the numerical solution of this differential equation; cf. e.g.~\cite{HaL14}. Alternatively, a higher-order explicit method may give an accurate approximation to $\wh\bfS(t_1)$ and thus ensure a decrease in $f$.

\subsection{Schr\"odinger equations}
We now turn to the low-rank approximation of the  matrix Schr\"odinger equation
\begin{equation}\label{schroedinger}
\iu \dt \bfA(t) = \bfH[\bfA(t)].
\end{equation}
The Hamiltonian $\bfH:\C^{m\times n} \to \C^{m\times n}$ is a linear map that is self-adjoint, i.e.,
$$
\langle \bfH[\bfY], \bfZ \rangle = \langle \bfY, \bfH[\bfZ] \rangle \qquad\text{ for all } \bfY,\bfZ \in \C^{m\times n},
$$
where the complex inner product is given as $\langle \bfA,\bfB \rangle = \sum_{i,j} \overline a_{ij} b_{ij}$ so that the induced norm $\|\cdot\|$ is the Frobenius norm of complex matrices. The energy of a state (here: matrix) $\bfY$ of norm $1$ is 
$$
E(\bfY)= \langle \bfY, \bfH[\bfY] \rangle.
$$

We are now in a complex setting to which the real rank-adaptive integrator is readily extended: the transposes in the algorithm are replaced by  conjugate transposes. The result on norm preservation of the previous subsection applies also here, with essentially the same proof.
Remarkably, we also have energy preservation up to the order of the truncation tolerance.

\begin{theorem}\label{thm:schroedinger}
The numerical result $\bfY_1$ obtained after a step of the adaptive integrator with the truncation tolerance $\vartheta$ applied to the matrix Schr\"odinger equation \eqref{schroedinger} with $\bfY_0$ of norm $1$ satisfies
$$
\bigl| E(\bfY_1) - E(\bfY_0)  \bigr| \le \gamma\vartheta
$$
with $\gamma =  \|\bfH[\bfY_1] + \bfH[\wh\bfY_1] \|$.
\end{theorem}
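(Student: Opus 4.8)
The plan is to follow the same template as the norm-preservation proof (Theorem~\ref{thm:norm}), establishing exact energy conservation along the non-truncated Galerkin flow $\wh\bfY(t)=\wh\bfU\wh\bfS(t)\wh\bfV^*$ and then paying a single truncation penalty of size $\vartheta$. First I would show that the energy is conserved exactly by the $S$-step. Differentiating $E(\wh\bfY(t))=\langle \wh\bfY(t),\bfH[\wh\bfY(t)]\rangle$ along \eqref{S-step} (with $\iu\,\dt{\wh\bfS}=\wh\bfU^*\bfH[\wh\bfU\wh\bfS\wh\bfV^*]\wh\bfV$, the complex analogue replacing transposes by adjoints), and using the self-adjointness of $\bfH$, I expect the two terms arising from the product rule to cancel, giving $\tfrac{d}{dt}E(\wh\bfY(t))=0$. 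The cleanest route is to pull the constant bases $\wh\bfU,\wh\bfV$ in and out of the inner product and use $\langle \bfH[\bfY],\bfY\rangle=\langle\bfY,\bfH[\bfY]\rangle=\overline{\langle\bfY,\bfH[\bfY]\rangle}$, so that $E(\wh\bfY)$ is real and its time derivative reduces to $2\,\mathrm{Re}\,\langle \wh\bfY,\bfH[\wh\bfY]\rangle$-type expressions that vanish by skew-Hermitian structure of the $\iu$ factor. This gives $E(\wh\bfY_1)=E(\wh\bfY(t_0))$.

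Next I would invoke Lemma~\ref{lem:init} to identify the initial value of the Galerkin flow with $\bfY_0$: since $\wh\bfY(t_0)=\wh\bfY_0=\bfY_0$, we get $E(\wh\bfY(t_0))=E(\bfY_0)$, hence $E(\wh\bfY_1)=E(\bfY_0)$ exactly. It remains only to compare $E(\wh\bfY_1)$ with $E(\bfY_1)$ after truncation. Here I would use bilinearity of $\bfH$ and the inner product to write the difference as a telescoping expression,
$$
E(\bfY_1)-E(\wh\bfY_1)=\langle \bfY_1,\bfH[\bfY_1]\rangle-\langle \wh\bfY_1,\bfH[\wh\bfY_1]\rangle
=\langle \bfY_1-\wh\bfY_1,\bfH[\bfY_1]\rangle+\langle \wh\bfY_1,\bfH[\bfY_1-\wh\bfY_1]\rangle,
$$
and then move $\bfH$ off the second term by self-adjointness so that both contributions pair the truncation error $\bfY_1-\wh\bfY_1$ against $\bfH[\bfY_1]$ and $\bfH[\wh\bfY_1]$ respectively. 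Applying Cauchy--Schwarz and the truncation bound $\|\bfY_1-\wh\bfY_1\|\le\vartheta$ yields $|E(\bfY_1)-E(\wh\bfY_1)|\le \|\bfH[\bfY_1]+\bfH[\wh\bfY_1]\|\,\vartheta=\gamma\vartheta$, which combined with the exact conservation above gives the claim.

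The one delicate point I anticipate is bookkeeping in the complex setting: the inner product $\langle\bfA,\bfB\rangle=\sum_{ij}\overline{a}_{ij}b_{ij}$ is conjugate-linear in its first argument, so in the telescoping identity I must be careful about which factor the conjugation lands on and must use self-adjointness in the form $\langle\wh\bfY_1,\bfH[\bfY_1-\wh\bfY_1]\rangle=\langle\bfH[\wh\bfY_1],\bfY_1-\wh\bfY_1\rangle$ to expose the correct pairing. With the adjoint replacing the transpose throughout the $S$-step, the cancellation in the energy derivative is the genuine content; the rest is Cauchy--Schwarz. I do not expect to need that $E$ is real for the final estimate, only for interpreting the conservation statement, but verifying $\gamma$ has exactly the stated form $\|\bfH[\bfY_1]+\bfH[\wh\bfY_1]\|$ (rather than a factor $2$ times a mean) is the place to be attentive about the splitting of the two terms.
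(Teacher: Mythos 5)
Your proposal follows essentially the same route as the paper's proof: exact conservation of $E$ along the non-truncated Galerkin flow $\wh\bfY(t)=\wh\bfU\wh\bfS(t)\wh\bfV^*$ of the S-step (the derivative collapses to $2\,\Re\,(-\iu)\,\|\wh\bfU^*\bfH[\wh\bfY(t)]\wh\bfV\|^2=0$), identification of the starting value via Lemma~\ref{lem:init} so that $E(\wh\bfY_1)=E(\bfY_0)$, and a Cauchy--Schwarz estimate of size $\vartheta$ for the truncation step. In that sense the plan is sound and matches the paper.

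There is, however, one genuine slip, and it is exactly at the point you flagged: your closing claim that realness of $E$ is \emph{not} needed for the final estimate is backwards. In your telescoped identity, after moving $\bfH$ by self-adjointness, the truncation error sits in the conjugate-linear slot of one term and in the linear slot of the other:
\begin{equation*}
E(\bfY_1)-E(\wh\bfY_1)=\langle \bfY_1-\wh\bfY_1,\,\bfH[\bfY_1]\rangle+\overline{\langle \bfY_1-\wh\bfY_1,\,\bfH[\wh\bfY_1]\rangle}.
\end{equation*}
These two terms cannot be merged into the single inner product $\langle \bfY_1-\wh\bfY_1,\bfH[\bfY_1+\wh\bfY_1]\rangle$ by sesquilinearity alone; applying Cauchy--Schwarz to each term separately yields only the weaker constant $\|\bfH[\bfY_1]\|+\|\bfH[\wh\bfY_1]\|$ rather than the stated $\gamma=\|\bfH[\bfY_1]+\bfH[\wh\bfY_1]\|$. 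To get $\gamma$ in the claimed form you must use that both $E(\bfY_1)$ and $E(\wh\bfY_1)$ are real (a consequence of self-adjointness of $\bfH$), take real parts, and use $\Re\,\overline{z}=\Re\, z$ together with $|\Re\, z|\le|z|$, which gives $E(\bfY_1)-E(\wh\bfY_1)=\Re\,\langle \bfY_1-\wh\bfY_1,\bfH[\bfY_1+\wh\bfY_1]\rangle$ and then the bound $\gamma\vartheta$ by Cauchy--Schwarz. This is also how the paper's one-line identity $|E(\bfY_1)-E(\wh\bfY_1)|=|\langle \bfY_1-\wh\bfY_1,\bfH[\bfY_1+\wh\bfY_1]\rangle|$ must be read: it silently uses that the left-hand side is real. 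So the fix is a single observation, but without it the step as you wrote it does not deliver the theorem's constant.
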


\begin{proof} The proof is similar to that of Theorem~\ref{thm:grad}. Along the solution of the differential equation \eqref{S-step} with $\bfF(\bfY)=-\iu\bfH[\bfY]$ we have, with $\wh\bfY(t) = \wh\bfU \wh \bfS(t) \wh\bfV^*$,
\begin{align*}
\frac{d}{dt}\,  E( \wh\bfY(t))  &= 2\, \Re \langle \bfH[\wh\bfY(t)],  \wh\bfU \dt{\wh \bfS}(t) \wh\bfV^* \rangle 
= 2\, \Re \langle \wh\bfU^* \bfH[\wh\bfY(t)] \wh\bfV, \dt{\wh\bfS}(t) \rangle 
 \\ &= 2\, \Re \langle \wh\bfU^* \bfH[\wh\bfY(t)]\wh\bfV, - \iu\,\wh\bfU^* \bfH[\wh\bfY(t)]\wh\bfV \rangle
 \\ &= 2\, \Re\, (-\iu)\,\|\wh\bfU^* \bfH[\wh\bfY(t)]\wh\bfV\|^2 = 0.
 \end{align*}
Since $\wh\bfY(t_0)=\bfY_0$ 
by Lemma~\ref{lem:init} and $\wh\bfY_1=\wh\bfY(t_1)$, we obtain
$$
 E( \wh\bfY_1) = E(\bfY_0).
$$
Since the truncation is such that $\|\bfY_1 - \wh\bfY_1\| \le \vartheta$, we have by the Cauchy--Schwarz inequality
$$
|E(\bfY_1) - E(\wh\bfY_1)| =|\langle \bfY_1 -  \wh\bfY_1 , \bfH[\bfY_1 + \wh\bfY_1] \rangle|
\le \vartheta \,\|\bfH[\bfY_1] + \bfH[\wh\bfY_1] \|,
$$
which yields the stated result.
\qed
\end{proof}

%We remark that the near-conservation of norm and energy extends to nonlinear matrix Schr\"odinger  equations
%\begin{equation}\label{schroedinger}
%\iu \dt \bfA(t) = \bfG(\bfA(t)),
%\end{equation}
%where $\bfG$ is the gradient of a real-valued energy functional $E$, satisfying along every path $\bfA(t)$ in $\C^{m\times n}$
%$$
%\frac d{dt}\, E(\bfA(t)) = \Re \langle \bfG(\bfA(t)), \dt\bfA(t) \rangle .
%$$

\subsection{Hamiltonian systems}
Given a smooth Hamilton function $H:\R^{m\times n}\times \R^{m\times n} \to \R$, we consider the corresponding Hamiltonian differential equations
\begin{equation} \label{ham}
\dt \bfQ = \bfnabla_{\bfP} H(\bfQ,\bfP), \quad\ \dt \bfP = -\bfnabla_{\bfQ} H(\bfQ,\bfP).
\end{equation}
It is not advisable to do dynamical low-rank approximation in the usual way directly on these matrix differential equations. What we propose here, is to rewrite the differential equations \eqref{ham} in the complex variables
$$
\bfZ = \bfQ + \iu \bfP, \quad \overline \bfZ = \bfQ - \iu \bfP
$$
with the energy function $E$ defined by
$$
\tfrac12 E(\bfZ,\overline\bfZ)= H(\bfQ,\bfP),
$$
which yields the differential equation in Schr\"odinger form
\begin{equation} \label{eq:ham-schroed}
\iu \dt\bfZ = \bfnabla_{\bar  \bfZ} E(\bfZ,\overline\bfZ) \quad (= \bfnabla_\bfQ H + \iu \bfnabla_\bfP H).
\end{equation}
We then apply the complex version of the rank-adaptive integrator to this differential equation and finally separate real and imaginary parts to obtain approximations to $\bfQ(t),\bfP(t)$. With this approach, we obtain energy conservation up to a multiple of the truncation tolerance $\vartheta$, irrespective of the stepsize $h$.

\begin{theorem} The result $\bfZ_1=\bfQ_1+\iu\bfP_1$ obtained after a step of the rank-adaptive integrator with the truncation tolerance $\vartheta$ applied to the complex system \eqref{eq:ham-schroed} with initial value $\bfZ_0=\bfQ_0+\iu\bfP_0$ satisfies 
$$
\bigl| H(\bfQ_1,\bfP_1) - H(\bfQ_0,\bfP_0)\bigr| \le \beta \vartheta,
%\bigl| E(\bfZ_1,\overline \bfZ_1) - E(\bfZ_0,\overline \bfZ_0) \bigr| \le \beta \vartheta.
$$
where
$\beta=  \| \bfnabla H(\bfQ_1,\bfP_1) \| + O(\vartheta)$.
\end{theorem}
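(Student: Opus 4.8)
The plan is to follow the proof of Theorem~\ref{thm:schroedinger} almost verbatim, with the self-adjoint linear map $\bfH[\cdot]$ replaced by the (generally nonlinear) energy gradient $\bfnabla_{\overline\bfZ}E$. Writing $\wh\bfZ(t)=\wh\bfU\wh\bfS(t)\wh\bfV^*$ for the solution of the $S$-step \eqref{S-step} with right-hand side $\bfF(\bfZ)=-\iu\,\bfnabla_{\overline\bfZ}E(\bfZ,\overline\bfZ)$ coming from \eqref{eq:ham-schroed}, the first goal is to show that the non-truncated energy is exactly conserved, $E(\wh\bfZ_1)=E(\wh\bfZ_0)$; the second goal is to transfer this to the initial datum via Lemma~\ref{lem:init} and then to control the truncation error using $\|\bfZ_1-\wh\bfZ_1\|\le\vartheta$.

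For the first goal I would differentiate $E(\wh\bfZ(t),\overline{\wh\bfZ(t)})$ along the flow. Since $E$ is real-valued, the Wirtinger chain rule gives $\tfrac{d}{dt}E=2\,\Re\langle\bfnabla_{\overline\bfZ}E(\wh\bfZ),\dt{\wh\bfZ}\rangle$, which is the exact analogue of the identity $\tfrac{d}{dt}\langle\bfY,\bfH[\bfY]\rangle=2\,\Re\langle\bfH[\bfY],\dt\bfY\rangle$ used in Theorem~\ref{thm:schroedinger}. Inserting $\dt{\wh\bfZ}=\wh\bfU\dt{\wh\bfS}\wh\bfV^*$ and moving $\wh\bfU,\wh\bfV$ to the other factor of the inner product, together with the $S$-step relation $\dt{\wh\bfS}=-\iu\,\wh\bfU^*\bfnabla_{\overline\bfZ}E(\wh\bfZ)\wh\bfV$, reduces the derivative to $2\,\Re\langle\bfW,-\iu\bfW\rangle$ with $\bfW=\wh\bfU^*\bfnabla_{\overline\bfZ}E(\wh\bfZ)\wh\bfV$. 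As $\langle\bfW,-\iu\bfW\rangle=-\iu\|\bfW\|^2$ is purely imaginary, this vanishes, so $E(\wh\bfZ(t))$ is constant. Lemma~\ref{lem:init} gives $\wh\bfZ_0=\bfZ_0$, hence $E(\wh\bfZ_1)=E(\bfZ_0)$; in view of $H=\tfrac12 E$ this is the exact statement $H(\wh\bfQ_1,\wh\bfP_1)=H(\bfQ_0,\bfP_0)$, where $\wh\bfZ_1=\wh\bfQ_1+\iu\wh\bfP_1$.

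It remains to estimate $|H(\bfQ_1,\bfP_1)-H(\wh\bfQ_1,\wh\bfP_1)|$. Here I would split $\bfZ$ into its real and imaginary parts: since $\|\bfZ_1-\wh\bfZ_1\|^2=\|\bfQ_1-\wh\bfQ_1\|^2+\|\bfP_1-\wh\bfP_1\|^2$, the truncation bound $\|\bfZ_1-\wh\bfZ_1\|\le\vartheta$ controls the joint displacement of $(\bfQ_1,\bfP_1)$ from $(\wh\bfQ_1,\wh\bfP_1)$ by $\vartheta$. A mean-value argument for the real function $H$ along the segment joining the two points, followed by the Cauchy--Schwarz inequality, then yields $|H(\bfQ_1,\bfP_1)-H(\wh\bfQ_1,\wh\bfP_1)|\le\max_{0\le\tau\le1}\|\bfnabla H(\tau(\bfQ_1,\bfP_1)+(1-\tau)(\wh\bfQ_1,\wh\bfP_1))\|\,\vartheta$, and for $H$ with Lipschitz gradient the maximum equals $\|\bfnabla H(\bfQ_1,\bfP_1)\|+O(\vartheta)$ because the segment has length at most $\vartheta$. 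Combining this with the exact conservation $H(\wh\bfQ_1,\wh\bfP_1)=H(\bfQ_0,\bfP_0)$ gives the claimed bound with $\beta=\|\bfnabla H(\bfQ_1,\bfP_1)\|+O(\vartheta)$.

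I expect the main obstacle to lie in the first step rather than in the truncation estimate. Unlike the Schr\"odinger case, where $\bfH$ is linear and self-adjoint, here $E$ is a general energy, so one must justify the chain rule $\tfrac{d}{dt}E=2\,\Re\langle\bfnabla_{\overline\bfZ}E,\dt{\wh\bfZ}\rangle$ through Wirtinger calculus and verify that $\bfnabla_{\overline\bfZ}E$ is precisely the object appearing on the right-hand side of \eqref{eq:ham-schroed}; the cancellation then hinges, as before, on $\Re\langle\bfW,-\iu\bfW\rangle=0$. The truncation step is routine once the identity $\|\bfZ_1-\wh\bfZ_1\|^2=\|\bfQ_1-\wh\bfQ_1\|^2+\|\bfP_1-\wh\bfP_1\|^2$ is noted.
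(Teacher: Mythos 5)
Your proposal is correct and follows essentially the same route as the paper's proof: exact conservation of $E$ along the non-truncated $S$-step flow via the Wirtinger chain rule and the cancellation $\Re\langle \bfW, -\iu\bfW\rangle = 0$, identification of the starting value through Lemma~\ref{lem:init}, and a mean-value estimate over the segment from $(\wh\bfQ_1,\wh\bfP_1)$ to $(\bfQ_1,\bfP_1)$ using $\|\bfZ_1-\wh\bfZ_1\|\le\vartheta$, yielding $\beta=\max_{0\le\tau\le 1}\|\bfnabla H(\tau\bfQ_1+(1-\tau)\wh\bfQ_1,\tau\bfP_1+(1-\tau)\wh\bfP_1)\|$. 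The only cosmetic difference is that you spell out the justification of the chain rule and the norm identity $\|\bfZ_1-\wh\bfZ_1\|^2=\|\bfQ_1-\wh\bfQ_1\|^2+\|\bfP_1-\wh\bfP_1\|^2$, which the paper uses implicitly.
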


\begin{proof} The proof is similar to that of Theorems~\ref{thm:grad} and~\ref{thm:schroedinger}. Along the solution of the differential equation \eqref{S-step} with $\bfF(\bfZ)=-\iu \bfnabla_{\bar  \bfZ} E(\bfZ,\overline\bfZ)$ we have, with $\wh\bfZ(t) = \wh\bfU \wh \bfS(t) \wh\bfV^*$,
\begin{align*}
\frac{d}{dt}\, E(\wh\bfZ(t),\overline{\wh\bfZ(t)}) &= 2\,\Re\langle \bfnabla_{\bar  \bfZ} E(\wh\bfZ(t),\overline {\wh\bfZ(t)}),  \wh\bfU \dt{\wh \bfS}(t) \wh\bfV^* \rangle
\\ 
&= 2\,\Re\langle \wh\bfU^* \bfnabla_{\bar  \bfZ} E(\wh\bfZ(t),\overline {\wh\bfZ(t)})\wh\bfV, \dt{\wh \bfS}(t) \rangle 
 \\ &= 2\,\Re\langle \wh\bfU^* \bfnabla_{\bar  \bfZ} E(\wh\bfZ(t),\overline {\wh\bfZ(t)})\wh\bfV, -\iu \wh\bfU^* \bfnabla_{\bar  \bfZ} E(\wh\bfZ(t),\overline {\wh\bfZ(t)})\wh\bfV \rangle 
 \\
 &= 2 \,\Re \,(-\iu)\|\wh\bfU^* \bfnabla_{\bar  \bfZ} E(\wh\bfZ(t),\overline {\wh\bfZ(t)})\wh\bfV\|^2 = 0.
\end{align*}
Since $\wh\bfZ(t_0)=\bfZ_0$ 
by Lemma~\ref{lem:init}, we obtain for $\wh\bfZ_1=\wh \bfQ_1 + \iu \wh\bfP_1 =\wh\bfZ(t_1)$ that
$$
H(\wh\bfQ_1,\wh\bfP_1) =
\tfrac12 E(\wh\bfZ_1,\overline{\wh\bfZ_1}) = \tfrac12 E(\bfZ_0,\overline{\bfZ_0}) = H(\bfQ_0,\bfP_0).
$$
Since the truncation is such that $\| (\bfQ_1,\bfP_1) - (\wh\bfQ_1,\wh\bfP_1) \| =
\|\bfZ_1 - \wh\bfZ_1\| \le \vartheta$, we obtain 
\begin{align*}
\bigl| H(\bfQ_1,\bfP_1) - H(\bfQ_0,\bfP_0)\bigr| &= \bigl| H(\bfQ_1,\bfP_1) - H(\wh\bfQ_1,\wh\bfP_1)\bigr|
 \le \beta \vartheta
\end{align*}
with
$\beta= \max_{0\le\tau\le 1} \| \bfnabla H(\tau \bfQ_1 + (1-\tau)\wh\bfQ_1,\tau \bfP_1 + (1-\tau)\wh\bfP_1) \|$.
%$\beta =  \max_{0\le\tau\le 1} \| \bfnabla_{\bar  \bfZ}E(\tau \bfZ_1 + (1-\tau)\wh\bfZ_1,\tau \overline\bfZ_1 + (1-\tau)\overline{\wh\bfZ}_1) \|
%$.  %= \| \bfnabla_{\bar  \bfZ}E( \bfZ_1, \overline\bfZ_1 ) \|+ O(\vartheta).$
%%\begin{align*}
%%\beta &=  \max_{0\le\tau\le 1} \| \bfnabla_{\bar  \bfZ}E(\tau \bfZ_1 + (1-\tau)\wh\bfZ_1,\tau \overline\bfZ_1 + (1-\tau)\overline{\wh\bfZ}_1) \|
%%\\
%%&= \| \bfnabla_{\bar  \bfZ}E( \bfZ_1, \overline\bfZ_1 ) \|+ O(\vartheta).
%%\end{align*}
%The result then follows on noting that $\bfnabla_{\bar  \bfZ}E( \bfZ_1, \overline\bfZ_1 )=  \bfnabla_\bfQ H(\bfQ_1,\bfP_1) + \iu \bfnabla_\bfP H(\bfQ_1,\bfP_1)$.
\qed
\end{proof}

%*******************************************************************************************
	
	\section{A rank-adaptive robust low-rank Tucker tensor integrator}
	
     The solution $A(t)\in \R^{n_1\times\dots\times n_d}$ of a tensor differential equation
	\begin{equation} \label{eq:fullEq-ten}
	\dt{A}(t) = F(t, A(t)), 
	\qquad
	A(0) = A_0 
	\end{equation}	
	is approximated by evolving tensors $Y(t)\in \R^{n_1\times\dots\times n_d}$ of varying multilinear rank $\bfr=(r_1,\dots,r_d)$.
		Such tensors are represented in the Tucker form \cite{DeLauthawer:HOSVD} and are written in a notation following \cite{KoldaBader:TensorDec}:
	\begin{align} \label{Tucker}
	&Y(t) = C(t) \bigtimes_{i=1}^d \bfU_i(t) , 
	\\ 
	& \text{i.e.,}\quad y_{i_1,\dots,i_d}(t) = \sum_{j_1,\dots,j_d} c_{j_1,\dots,j_d}(t) \,u_{i_1,j_1}(t)\dots u_{i_d,j_d}(t),
	\nonumber
	\end{align}
	where the slim basis matrices $\bfU_i \in \mathbb{R}^{n_i \times r_i}$ have orthonormal columns and
	the smaller core tensor $C(t) \in \mathbb{R}^{r_1 \times \dots \times r_d }$ is of full multilinear rank $\bfr$.

		We present a rank-adaptive modification of the fixed-rank Tucker tensor integrator of \cite{CeL21} that retains its favourable properties. The integrator computes approximations $Y^n=C^n\bigtimes_{i=1}^d \bfU_i^n \approx  A(t_n)$ of an adaptively determined rank $\bfr^n=(r_1^n,\dots,r_d^n)$ at discrete times $t_n$ ($n=0,1,2,\dots$).

	\subsection{Formulation of the algorithm}	
		One time step of integration from time $t_0$ to $t_1=t_0+h$  starting from a  Tucker tensor 
		of multilinear rank $\bfr^0=(r_1^0,\dots,r_d^0)$ in factorized form, 
		$Y^0 = C^0 \bigtimes_{i=1}^d \bfU_i^0$,
	 computes an updated Tucker tensor 
		of multilinear rank $\bfr^1=(r_1^1,\dots,r_d^1)$ in factorized form, $Y^1 = C^1 \bigtimes_{i=1}^d \bfU_i^1$.
		In the following algorithm we let $\bfr=\bfr^0$ and we put a hat on quantities related to rank $2\bfr$.

		\begin{enumerate}
		\item Compute augmented basis matrices $\wh\bfU_i\in\R^{n_i\times 2r_i}$ for $i=1,\dots,d$ (in parallel):\\[2mm]
		Perform a QR factorization of the transposed $i$-mode matricization of the core tensor:
		$$ \text{\textbf{Mat}}_i(C^0)^\top = \textbf{W}_i \bfS_i^{0,\top} .$$
		%where $ \textbf{Q}_i \in \R^{r_{\neg i} \times r_i}$ has orthonormal columns.
		With 
		$ \bfV_i^{0,\top} = 
		%			\text{\textbf{Mat}}_1( \text{\textbf{Ten}}_1(\textbf{Q}_0^\top 
		%			\bigtimes_{i=2}^{d} \bfU_0^\top )). 
		\textbf{W}_i^{\top} \bigotimes_{j \neq i}^d \bfU_j^{0,\top} \in \R^{r_i \times n_{\neg i}} 
		$
		(which yields 
		$ {\textbf{Mat}}_i(Y_0) = \bfU_i^0 \bfS_i^0 \bfV_i^{0, \top} $)\\[2mm]
		and the  matrix function $\bfF_{i}(t, \cdot) := \text{\textbf{Mat}}_i \circ F(t, \cdot) \circ  \textit{Ten}_i$,
		integrate from $t=t_0$ to $t_1$ the $n_i \times r_i$ matrix differential equation
		$$ 
		\dt{\bfK}_i(t) = \bfF_{i}(t,\bfK_i(t) \bfV_i^{0,\top}) \bfV_i^0,
		%{\textbf{Mat}}_i \Bigl(F\bigl(t, \textit{Ten}_i(\bfK_i(t) \bfV_i^{0,\top})\bigr)\Bigr) \bfV_i^0, 
		\qquad \bfK_i(t_0) = \bfU_i^0 \bfS_i^0.
		$$
				Determine the columns of $\wh \bfU_i\in \R^{n_i\times 2r_i}$ as an orthonormal basis of the range 
		of the $n_i\times 2r_i$ matrix $(\bfK_i(t_1),\bfU_i^0)$ (e.g.~by QR decomposition) and compute the $2r_i\times r_i$ matrix $\wh\bfM_i= \wh\bfU_i^{\top} \bfU_i^0$.

		\item Augment and update the core tensor $C^0\to \wh C(t_1)$:\\[2mm]
		Integrate from $t=t_0$ to $t_1$ the $2r_1 \times\dots\times 2r_d$  tensor differential equation
			\begin{align*}
			&\dt{\wh C}(t) = F \left( t, \wh C(t) \bigtimes_{i=1}^d \wh\bfU_i \right) \bigtimes_{i=1}^d \wh\bfU_i^{\top},
			\quad \wh C(t_0)  = C^0 \bigtimes_{i=1}^d \wh\bfM_i .
			\end{align*}
			%and set $C_1=C(t_1)$.		
	\item Truncate to the tolerance~$\vartheta$ (cf.~\cite{de2000best}): Set $C_0=\wh C(t_1)$. For $i=1,\dots,d$ (sequentially), set $\wh C_i = C_{i-1}$,
		compute the SVD 
		$$ {\textbf{Mat}}_i(\wh C_i)= \wh \bfP_i \wh \bfSigma_i \wh \bfQ_i^\top$$ 
		and choose the new rank $r_i^1\le 2r_i$ as the %number of singular values that are greater than $\vartheta$. 
		minimal number $r_i^1$ such that 
		$$
		\biggl(\ \sum_{j=r_i^1+1}^{2r_i} \sigma_j^2 \biggr)^{1/2} \le \vartheta/d.
		$$
		Let $\bfSigma_i$ be the  $r_i^1\times r_i^1$ diagonal matrix with the $r_i^1$ largest singular values of $\wh\bfSigma_i$ and let $\bfP_i^1\in \R^{2r_i\times r_i^1}$ and $\bfQ_i^1$ contain the first $r_i^1$ columns of $\wh \bfP_i$ and $\wh \bfQ_i$, respectively.\\
		Tensorize $C_i={\textit{Ten}}_i (\bfSigma_i^1\bfQ_i^{1,\top}) \in\R^{r_1^1\times\dots\times r_i^1\times 2r_{i+1}\times \dots\times 2r_d}$ and set $\bfU_i^1 = \wh \bfU \bfP_i^1\in \R^{n_i\times r_i^1}$.
%		Compute the new factors for the approximation of $\bfY(t_1)$ as follows:
%Let $\bfS_1$ be the $r_1\times r_1$ diagonal matrix with the $r_1$ largest singular values and let $\bfP_1\in \R^{2r\times r_1}$ and $\bfQ_1\in \R^{2r\times r_1}$ contain the first $r_1$ columns of $\wh \bfP$ and $\wh \bfQ$, respectively. Finally, set $\bfU_1 = \wh \bfU \bfP_1\in \R^{m\times r_1}$ and
%$\bfV_1 = \wh \bfV \bfQ_1 \in \R^{n\times r_1}$.
	\end{enumerate} 
	With $C^1=C_d$, the approximation after one time step is then given by 
	\begin{equation}\label{Y1-ten}
	 Y^1 = C^1 \bigtimes_{i=1}^d \bfU_i^1.
	 \end{equation}
		To continue in time,  we take $Y^1$ as starting value for the next step and do another step of the integrator, and so on. 
 	
	\subsection{Properties}
\hskip 5mm -- The exactness property of Theorem~6 in \cite{CeL21} extends to the rank-adaptive Tucker tensor integrator, as can be shown by combining the proof of that theorem with the arguments in the proof of Theorem~\ref{thm:exact}. 

-- The robust error bound of Theorem~7 in \cite{CeL21} also extends to the rank-adaptive Tucker tensor integrator, with an extra term
$c_3\vartheta$ in the error bound as in Theorem~\ref{thm:robust}.

-- The preservation of (anti-)symmetry of Theorem~8 in \cite{CeL21} extends likewise.

-- So does the conservation of norm up to the truncation tolerance $\vartheta$ of Theorem~\ref{thm:norm}

-- and the decrease of the functional in gradient systems.

-- Also the near-conservation of energy for Schr\"odinger equations and Hamiltonian systems extends to the rank-adaptive Tucker tensor integrator.

The proofs of these extensions do not require new arguments beyond those of \cite{CeL21} and of Sections 2 and 3, therefore are omitted.

		\section{Numerical Experiments}
In this section, we present results of different numerical experiments.
These numerical simulations are implemented using \textsc{Matlab} R2019b and \textsc{Julia} 1.5.2.

\subsection{Error behaviour comparison}

 We compare the error behaviour of the ``unconventional'' matrix integrator of \cite{CeL21} with the rank-adaptive matrix integrator of Section \ref{subsec:alg-newint}. The matrix numerical example of \cite[Section 6.2]{CeL21} is considered:
\begin{equation*}
	\dt{\bfY}(t) = -\bfH[\bfY(t)], \quad \bfY(t_0) = \bfU_0 \bfS_0 \bfV_0^\top \in \R^{n \times n},
\end{equation*}
where
\begin{align*}
	&\bfH[\bfY] = \Big(\bfV_\text{cos} -\frac{1}{2}\bfD \Big) \ \bfY + \bfY \ \Big(\bfV_\text{cos} -\frac{1}{2}\bfD \Big)^\top \in \R^{n \times n},
	\\
	& \bfD = \texttt{tridiag}(-1,2,-1) \in \R^{n \times n} , 
	\\
	& \bfV_\text{cos} := \text{diag} \{ 1- \cos( \frac{2 \pi j }{n} ) \}, \quad j=-n/2, \dots, n/2-1 \ . 
\end{align*}

%The function H arises from the Hamiltonian $\mathcal{H} = -\frac{1}{2}\Delta_{\mathrm{discrete}} + V(x)$ on a equidistant space grid with the torsional potential $V(x) = (1-\text{cos}(x))$.

The diagonal matrix $\bfS_0 \in \R^{n \times n}$ has elements $ (S_0)_{ii} = 10^{-i} $ for $i=1, \dots n$ and the orthonormal matrices $\bfU_0, \bfV_0 \in \R^{n \times n}$ are randomly generated. 

The reference solution is computed with the \textsc{Matlab} solver \texttt{ode45} and  tolerance parameters \textsc{\{'RelTol', 1e-10, 'AbsTol', 1e-10\} }. The differential equations appearing in the substeps of the fixed-rank and adaptive-rank matrix integrators   are integrated with a second-order explicit Runge--Kutta method. 

\begin{figure}[t]
	\includegraphics[height=5cm, width=\textwidth]{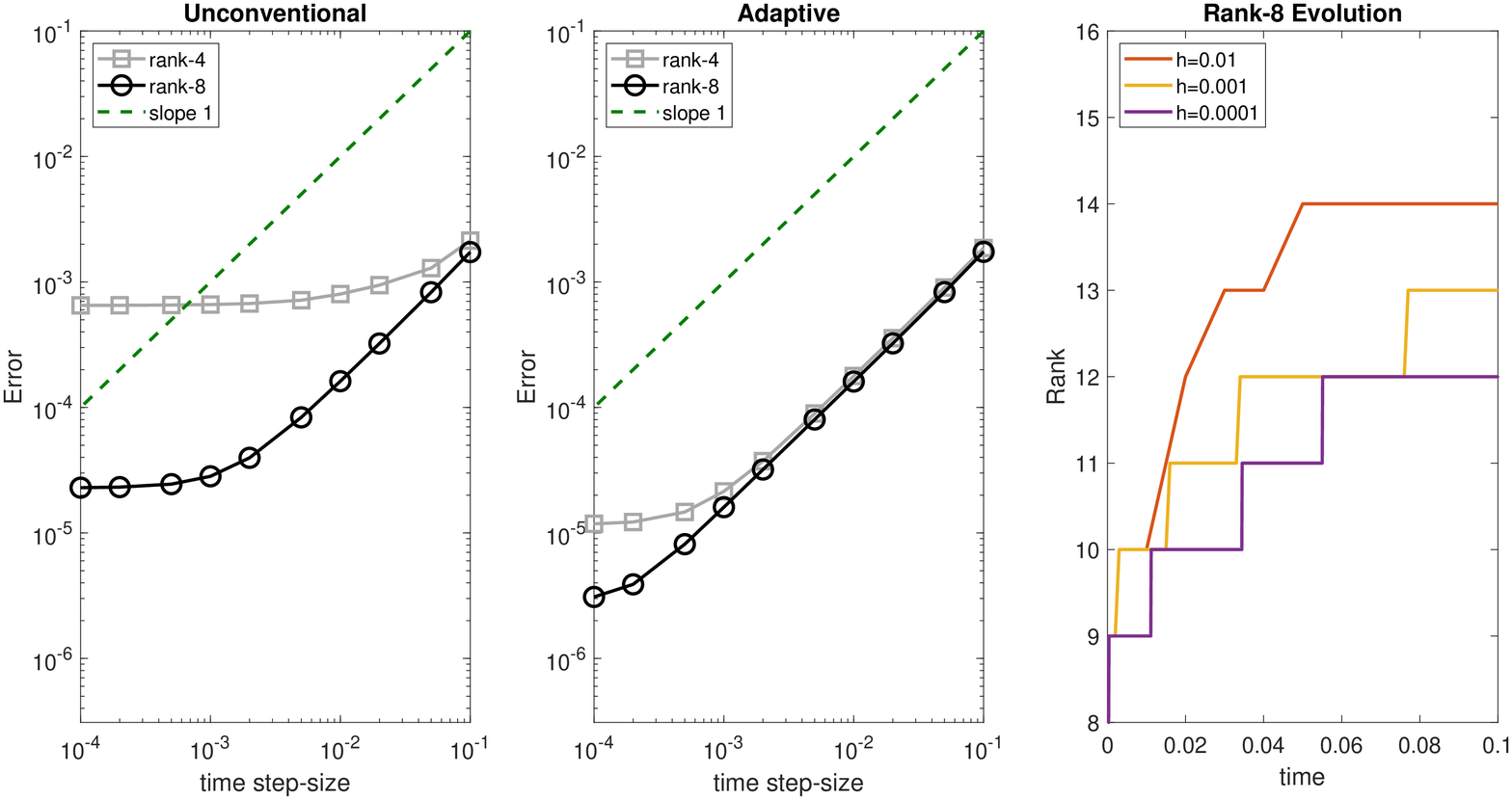}
	\caption{
		Comparison of the rank-$4$ and rank-$8$ approximations of the fixed-rank integrator of \cite{CeL21} and the new rank-adaptive integrator starting with initial ranks $4$ and $8$. The error is computed  with respect to the reference solution at time $T=0.1$. The rank evolution of the approximation with initial rank $8$ arising from the new matrix adaptive integrator for different time-step sizes is shown to the right.}
	\label{fig:err_adaptive}
\end{figure}

We choose $n=100$, ranks $r=4,8$ and final time $T=0.1$. The tolerance parameter selected for this numerical example is $\vartheta = 10^{-6}$.

The absolute errors $\| Y_n -A(t_n) \|_F$ at final time $t_n=T$ of the approximate solutions for different time-step sizes are shown in Figure~\ref{fig:err_adaptive}. The figure illustrates that the new rank-adaptive integrator retains first-order behavior in time and improves the error in the final approximation for smaller time-step sizes. The rank-adaptive integrator approximately  doubles the initial rank within this time interval.

\subsection{Radiation transport equation}
In the following numerical example, we consider a one-dimensional radiation transport equation. This equation is a mesoscopic model for the transport and interaction of radiation particles with a background material. For time $t \in [0, T]$ and particle density (or angular flux) $f=f(t,x,\mu)$, the radiation transport equation with isotropic scattering reads 
\begin{equation}
	\label{eq:rte}
	\begin{aligned}
		&\partial_t f + \mu \partial_x f + \sigma_s f = \frac{\sigma_s}{2} \int_{-1}^{1} f \,d\mu,
		\qquad 
		(x, \mu) \in [a,b] \times [-1,1] , \\[1mm]
		& f(t_0) = \frac{1}{\sqrt{2\pi}\sigma}\exp\Big(-\frac{x^2}{2\sigma^2} \Big).
	\end{aligned}
\end{equation}
The chosen initial condition is a Gaussian with constant deviation $\sigma = 3\cdot10^{-2}$. Hence, particles are initially positioned around $x=0$ and move into directions $\mu\in[-1,1]$. The reference solution to this problem is given by the \emph{standard de facto} Ganapol's benchmark test~\cite{ganapol2008analytical} and this problem has been investigated for dynamical low-rank approximations in \cite{PeMF20,PeM20}. As time increases, the scalar flux $\Phi(t,x) = \int_{-1}^{1} f(t,x,\mu) \,d\mu$ moves to the left and right side of the spatial domain, showing a discontinuous (or shock) profile at the front. When particles interact with the background material through collisions, which is the case for $\sigma_s>0$, the shock decreases over time and finally yields a smooth profile. In this work, we choose a scattering cross-section of $\sigma_s=1$.

The physical domain is discretized with a Lax--Friedrichs method in combination with a Legendre-polynomial expansion in the variable $\mu$.

A number of $N+1$ Legendre-polynomials is used and we split the space-interval $[a,b]$ in $N_x$ sub-intervals. Time-integration for the sub-steps of the adaptive integrator is performed with a first-order Runge-Kutta method and prescribed CFL number. 

The spatial domain has boundaries $a =-5$, $b=5$ and is discretized with $N_x=1000$ spatial cells. The polynomial representation of the scalar flux uses $N+1=200$ Legendre polynomials. A time step size is chosen with a CFL number of $0.99$. The tolerance parameter is set to $\vartheta = 10^{-1} \| \hat\bfSigma \|_2$ and $\vartheta = 5\cdot 10^{-2} \| \hat\bfSigma \|_2$. Here, the matrix $\hat\bfSigma$ arises from the SVD-factorization of the solution of the S-step computed with the adaptive matrix unconventional integrator, as illustrated in the last truncation step of the algorithm proposed in Section \ref{subsec:alg-newint}.

\begin{figure}
	\includegraphics[width=0.49\textwidth]{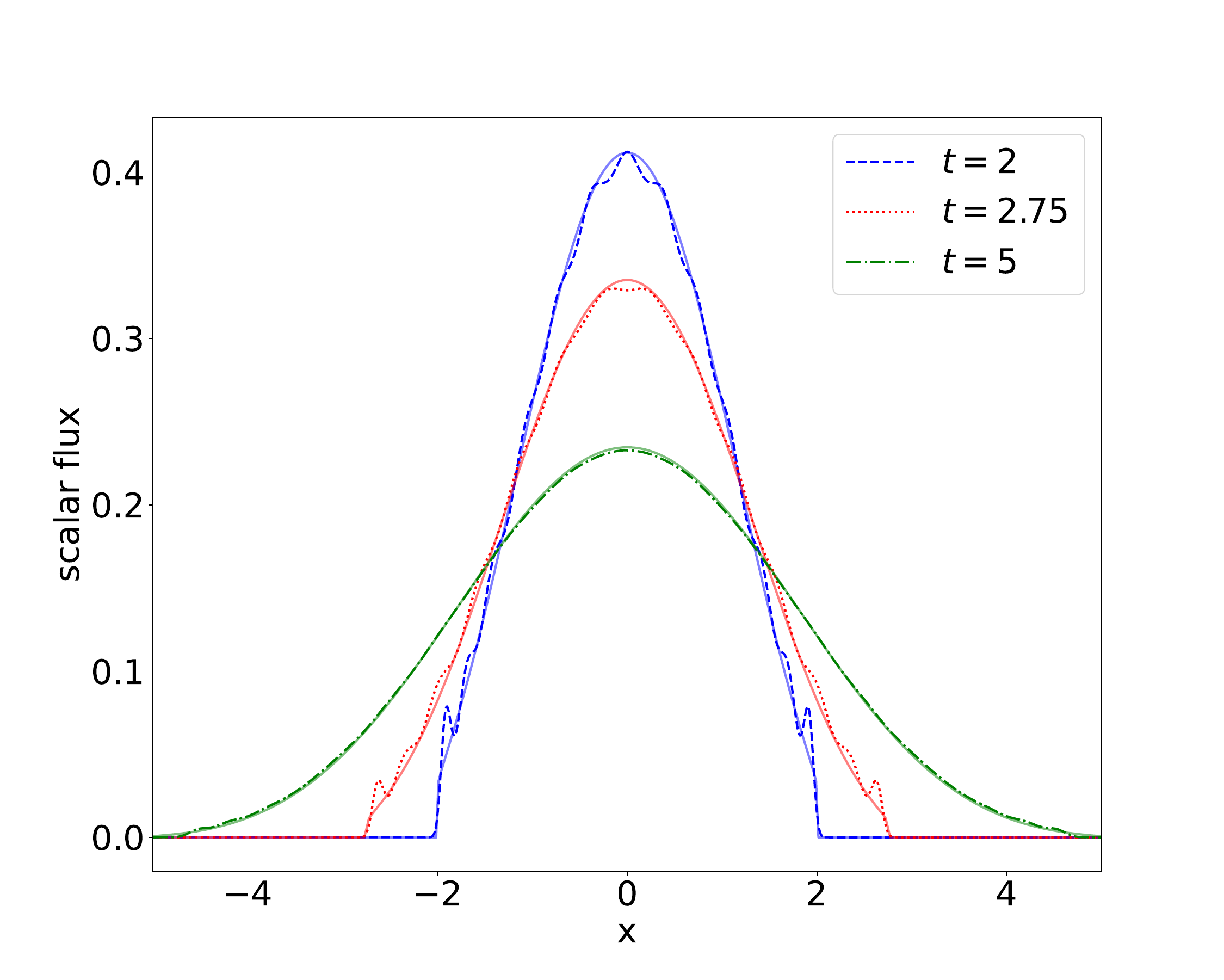}
	\includegraphics[width=0.49\textwidth]{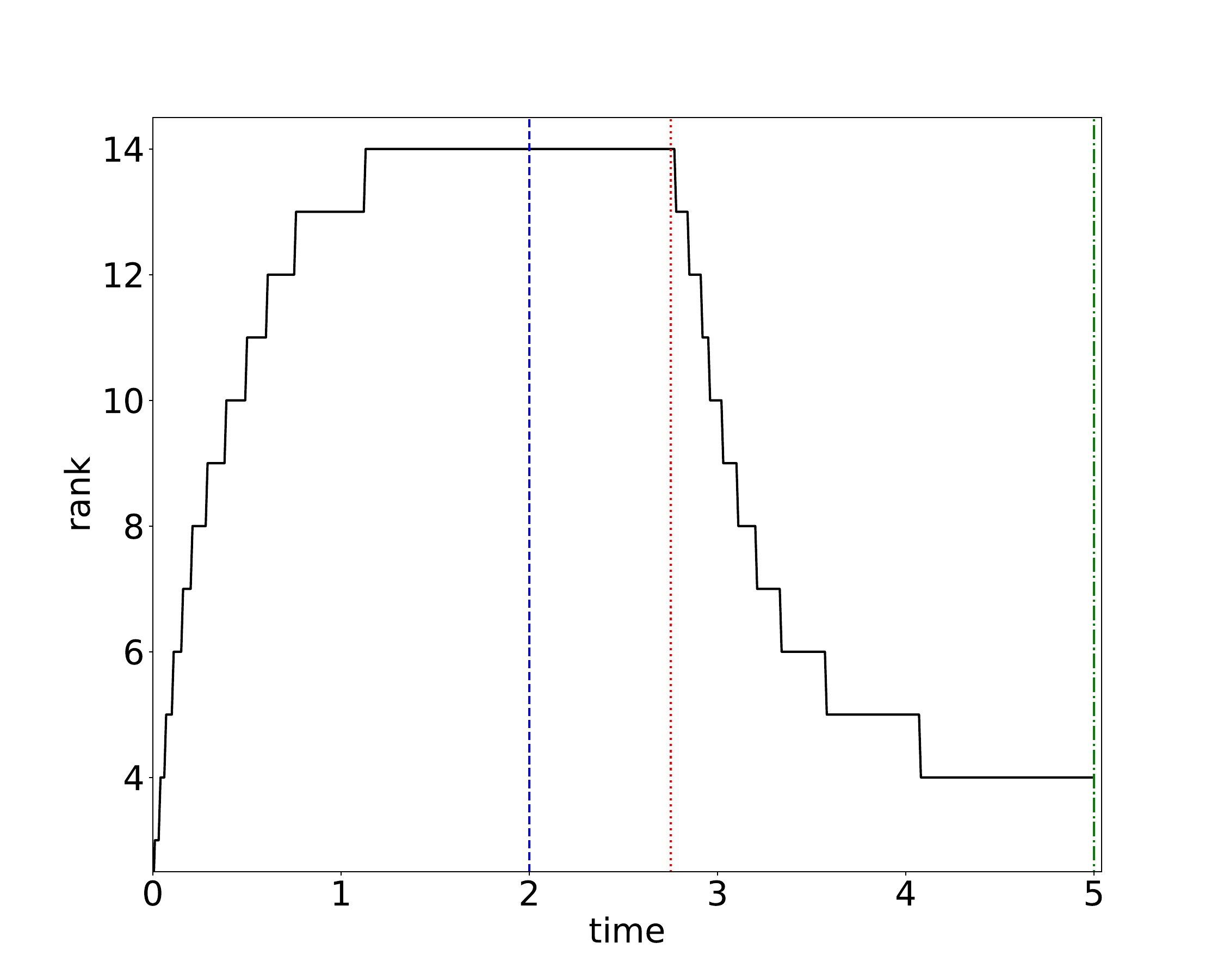}
		\includegraphics[width=0.49\textwidth]{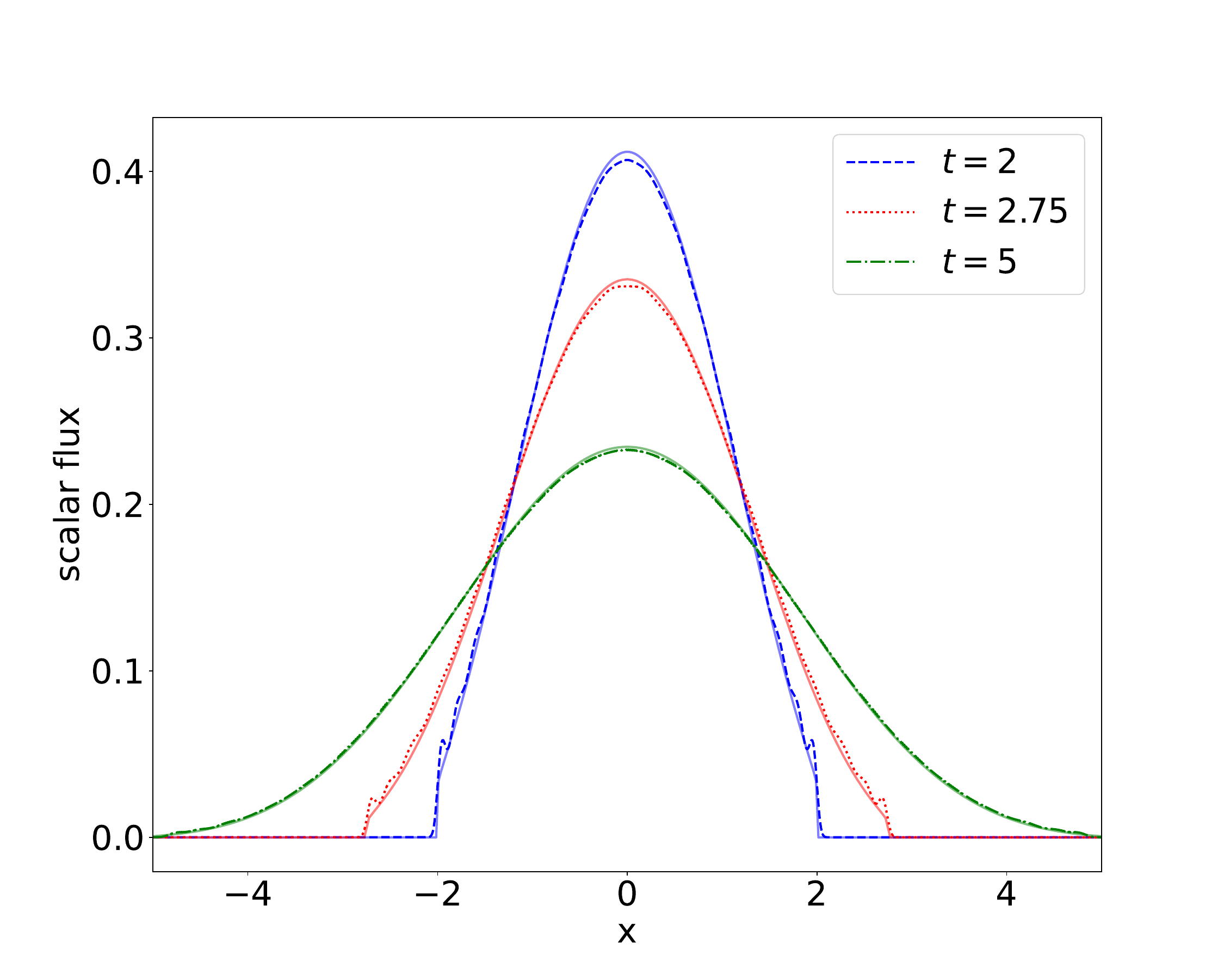}
	\includegraphics[width=0.49\textwidth]{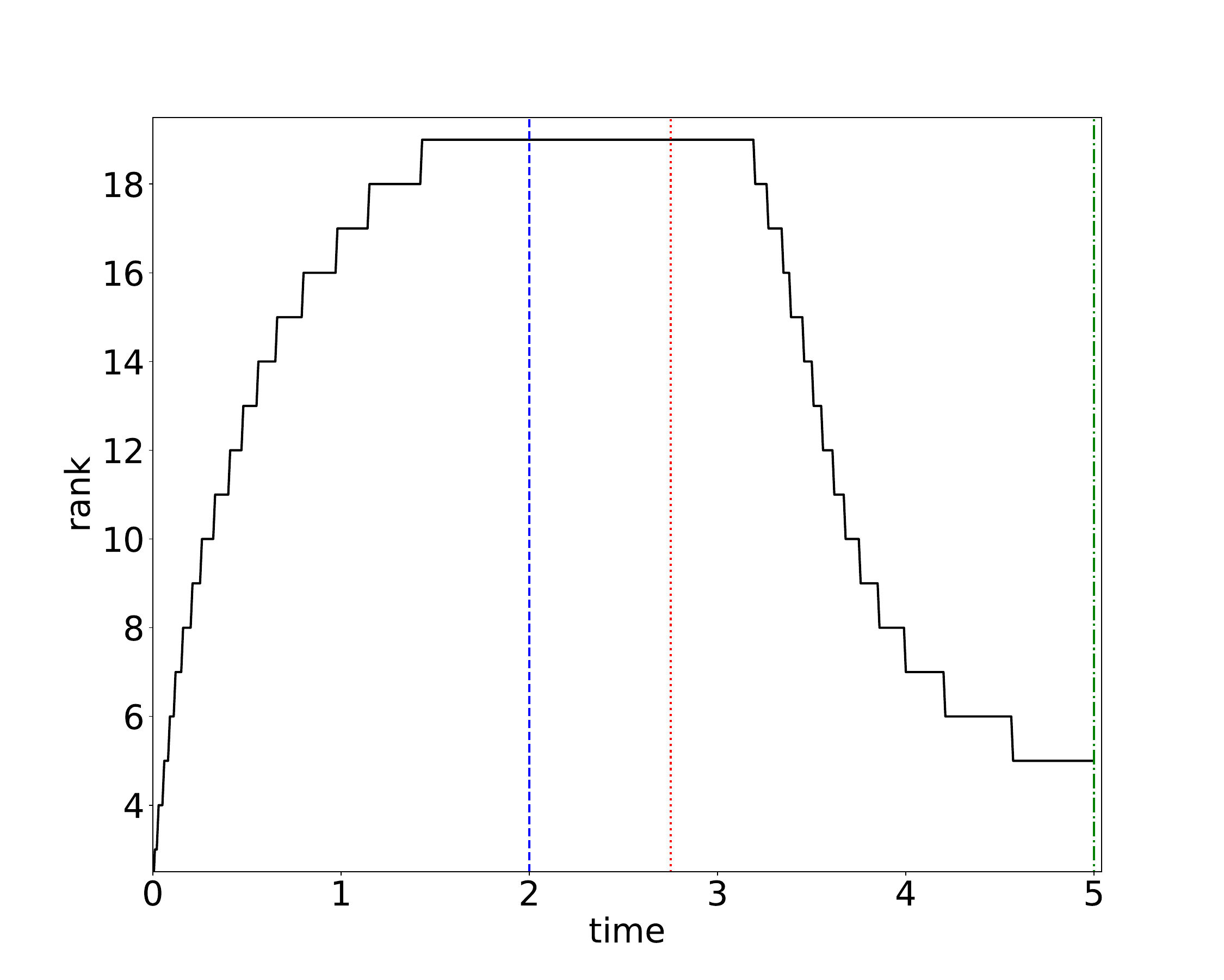}
	\caption{
		Top: Tolerance parameter $\vartheta = 10^{-1}\| \hat\bfSigma \|_2$. Bottom: Tolerance parameter $\vartheta = 5\cdot 10^{-2}\| \hat\bfSigma \|_2$. Left: Scalar flux of the reference solution (solid lines) of the radiation transport equation \eqref{eq:rte} in comparison with the approximation of the new rank-adaptive matrix integrator at different times. Right: Rank evolution of the approximation arising from the rank-adaptive integrator.}
	\label{fig:rte}
\end{figure}

In the left column of Figure \ref{fig:rte}, we consider the scalar flux of the 
reference solution at times $T\in\{2,2.75,5\}$ compared to the angular flux of a rank-$r$ approximation generated by 
%three different methods: the matrix projector-splitting integrator of \cite{LubichOseledets}, the unconventional matrix integrator of \cite{CeL21} and 
the rank-adaptive matrix integrator of Section \ref{subsec:alg-newint}. In the right column, the rank evolution is plotted. The top row depicts the scalar flux and rank for a tolerance parameter $\vartheta = 10^{-1}\| \hat\bfSigma \|_2$ and the bottom row uses $\vartheta = 5\cdot 10^{-2}\| \hat\bfSigma \|_2$. Solid lines show the analytic solution computed according to \cite{ganapol2008analytical}. Our numerical experiment shows that decreasing the tolerance parameter improves the solution quality by increasing the chosen rank. However, the overall characteristics remain unchanged: While the rank of the initial condition is $r=1$, the adaptive algorithm increases the rank automatically. This increased rank is beneficial to capture the shock profile of the solution in the beginning. As the shock dissolves, the rank is reduced. We observe that the approximation arising from the adaptive method shows good agreement with the reference solution.

\subsection{Uncertainty quantification}

The following numerical experiment investigates Burgers' equation with uncertain initial condition
\begin{equation}
	\label{eq:burgers}
	\begin{cases}
		&\partial_t u(t,x,\bm{\xi}) + \partial_x \frac{u(t,x,\bm{\xi})^2}{2} = 
0,
		\qquad 
		(x, \bm{\xi}) \in [a,b] \times \Theta\;, \\
		& u(t_0,x,\bm{\xi}) = u_{\text{IC}}(x,\bm{\xi})\;.
	\end{cases}
\end{equation}
A uniformly distributed random vector $\bm{\xi}\in\Theta = [-1,1]\times[0,1]$ is used to model uncertainties in the initial condition. Since the 
dynamics of this non-linear hyperbolic model mimics advection effects that arise in gas dynamics, Burgers' equation is a standard model to test numerical methods. Numerical and analytic investigations of Burgers' equation with uncertain initial condition can for example be found in \cite{poette2009uncertainty,tryoen2010intrusive,despres2013robust}. To demonstrate 
the effects of varying smoothness in time, we choose an initial condition of
\begin{align*}
u_{\text{IC}}(x,\bm{\xi}) = 
\begin{cases} u_L, & \mbox{if } x< x_0+\sigma_1\xi_1 \\ u_L+\frac{u_R(\xi_2)-u_L}{x_0-x_1} (x_0+\sigma_1 \xi_1-x), & \mbox{if } x\in[x_0+\sigma_1 \xi_1,x_1+\sigma_1 \xi_1]\\
u_R(\xi_2), & \text{else }
\end{cases}\;,
\end{align*}
with $u_R(\xi_2)= u_R+\sigma_2\xi_2$. Note that this intial condition is similar to \cite{poette2009uncertainty,kusch2019maximum}, when additionally assuming an uncertain right state $u_R$ to increase computational complexity. At time $t=0$ the solution is a ramp or forming shock ranging 
from $x_0+\sigma_1\xi_1$ to $x_1+\sigma_1\xi_1$. Since the left state $u_L$ moves faster than the right state $u_R$ to the right side of the domain, a shock will form over time. The time at which the shock has fully developed depends on $\xi_2$ and is given by $t_{\text{s}} = \frac{x_1-x_0}{u_L-u_R(\xi_2)}$. We use the following parameter values:
\begin{center}
    \begin{tabular}{ | l | p{5.25cm} |}
    \hline
    $[a,b]=[0,1]$ & range of spatial domain \\
    $T = 0.04$ & end time \\
    $N_x=600$ & number of spatial cells \\
    $x_0 = 0.3, x_1=0.4, u_L = 12, u_R = 1$ & parameters of initial condition \\
    $\sigma_1 = 0.2, \sigma_2 = 5$ & parameters of the uncertainty \\
    $\vartheta = \{1,1.2,1.5\}\cdot 10^{-2}\| \hat\bfSigma \|_2$ & tolerance parameter \\
    \hline
    \end{tabular}
\end{center}
A spatial discretization of equation \eqref{eq:burgers} is performed by a first order finite volume method with Lax--Friedrichs numerical fluxes. As time discretization an explicit Euler method is chosen. The stabilization of the finite volume method is applied in the $K$, $L$ and $S$-steps. We do not split the uncertain domain and choose a modal representation of the uncertain basis making use of tensorized Legendre polynomials. For each 
uncertainty, Legendre polynomials up to degree $19$, i.e., $20^2 = 
400$ polynomials, are used to represent the uncertain basis.

Numerical results for this testcase are presented in Figure~\ref{fig:ExpectationStdU}. An analytic solution for given values of $\bm{\xi}$ is determined with characteristics. Expectation and variance are computed by using a fine tensorized quadrature rule with $200\cdot 200$ Gauss-Legendre quadrature points. The resulting expectation is depicted in red and the corresponding standard deviation is shown in blue. The rank-adaptive method proposed in Section~\ref{subsec:alg-newint} shows satisfactory agreement with the analytic solution, especially for the expectation. Initially, the rank is chosen as $40$ and the method reduces this rank after the first time step. As time increases, a shock (or discontinuity) forms in both, the spatial and uncertain domain. The adaptive method captures the growing solution complexity by increasing the rank. After a certain time, the method remains at a fixed higher rank for all chosen tolerance parameters, where the rank depends on the chosen tolerance. Note that this rank will be reached after the shock has fully developed. This is most likely due to the sharpening of the numerical solution which 
results from increasing the rank. As a result, the singular values of the $\bfS$ 
matrix continue to grow. 

To point out differences to the fixed-rank integrator, we include a comparison of the rank-adaptive integrator with tolerance parameter $\vartheta = 0.015$ with numerical solutions for fixed rank $9$ and $25$. These ranks are the minimal and maximal rank chosen by the rank-adaptive integrator during the computation. The numerical solution of the rank-adaptive algorithm shows good agreement with the fixed-rank integrator when using a constant rank of $25$. In comparison to a fixed rank of $9$, the adaptive method yields a strongly improved solution quality.

\begin{figure}[h!]
\centering
\includegraphics[width=0.49\textwidth]{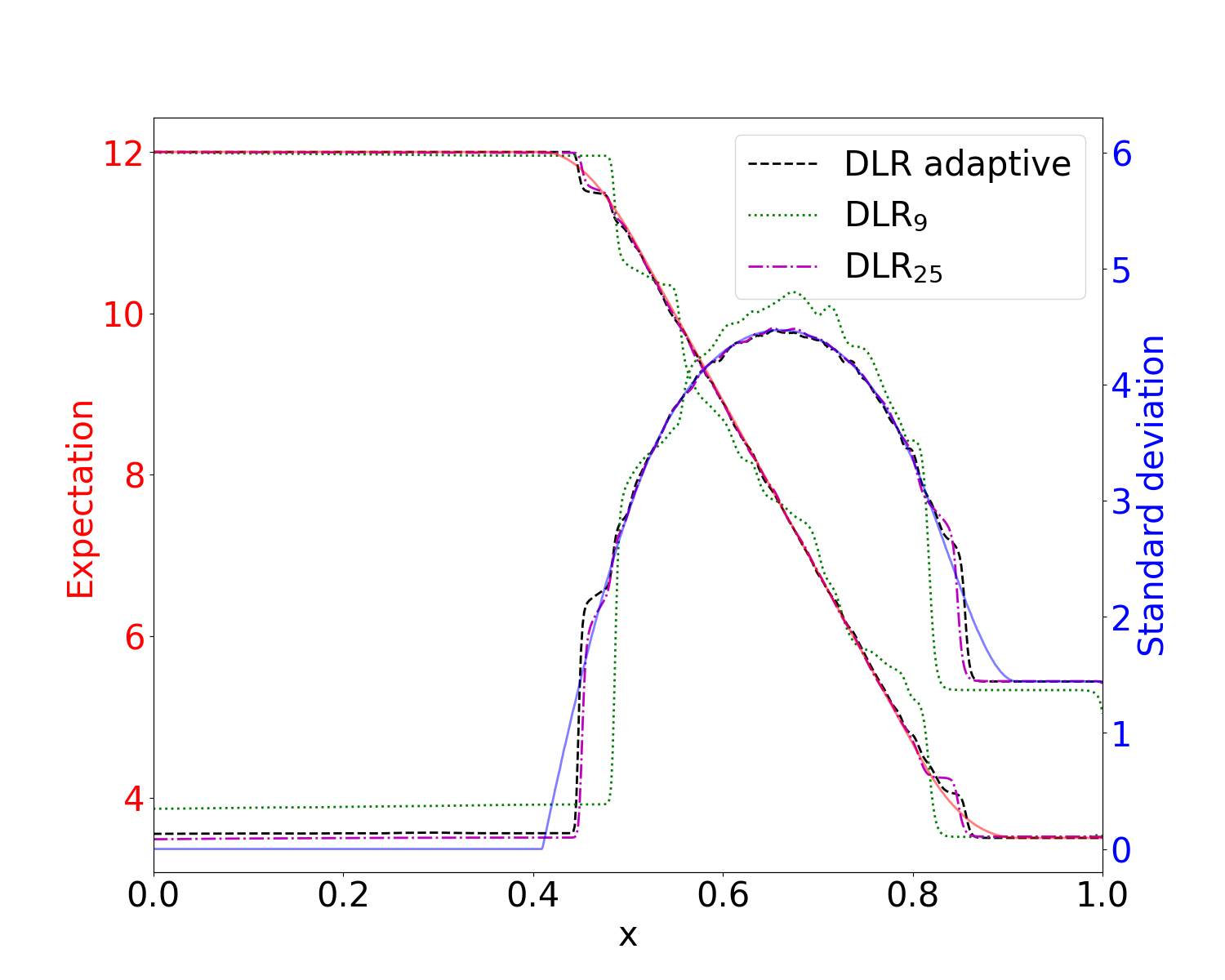}
\includegraphics[width=0.49\textwidth]{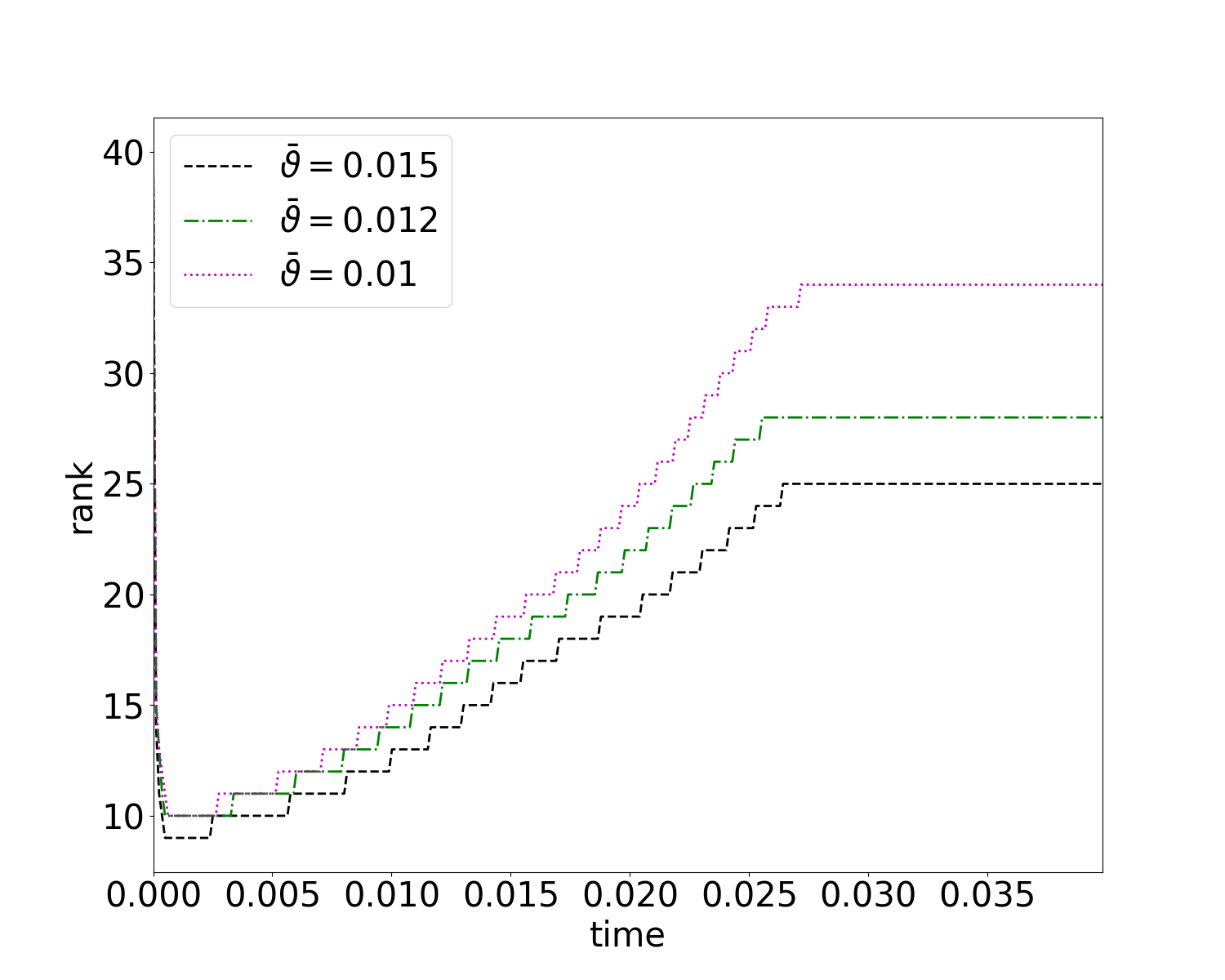}
\includegraphics[width=1.0\textwidth]{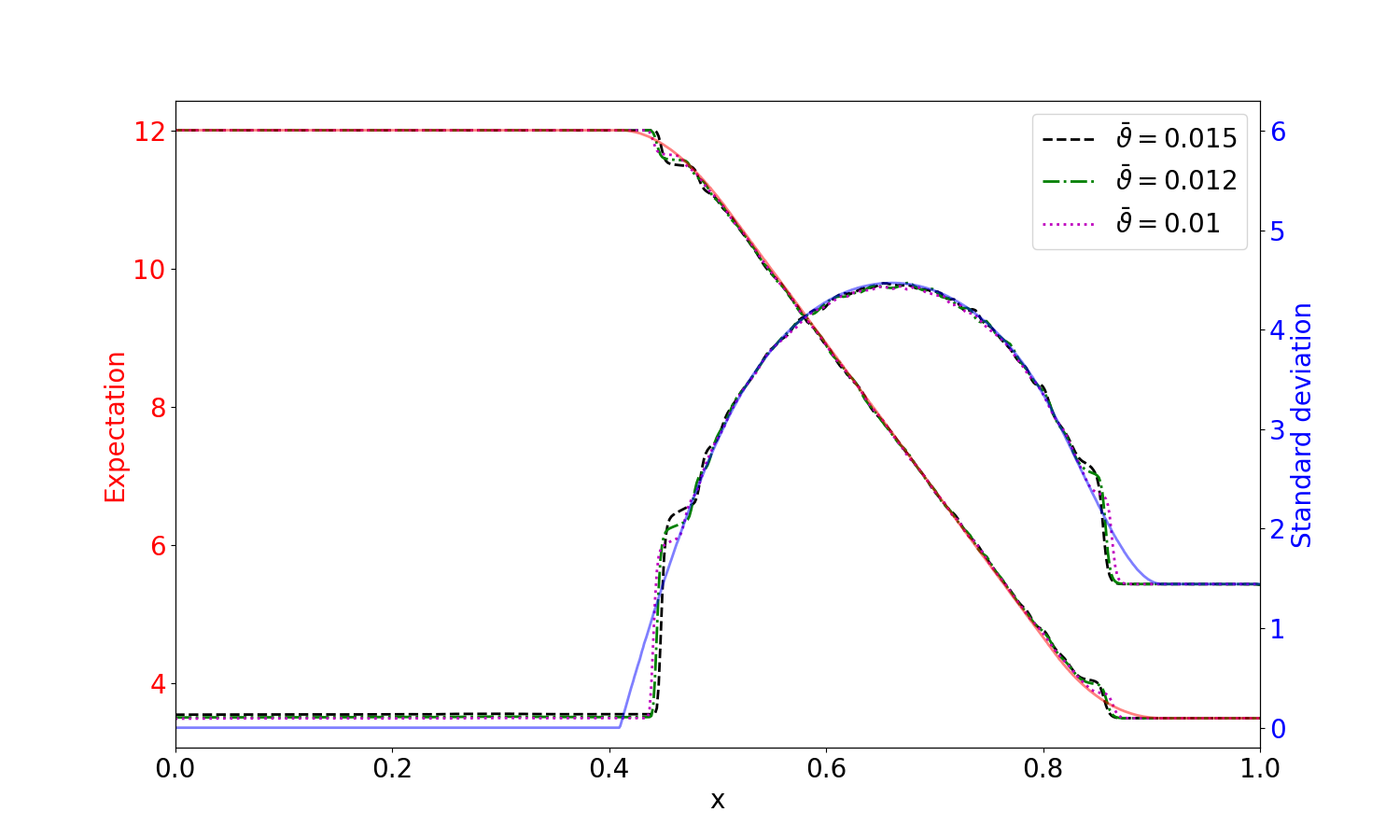}
\caption{Top left: Expected value (red) and variance (blue) of the analytic solution compared between the rank-adaptive and fixed-rank integrators. The rank-adaptive integrator uses the truncation tolerance $\vartheta = \bar{\vartheta}\| \hat\bfSigma \|_2$ where $\bar{\vartheta} = 0.015$. The fixed-rank integrator uses $r=9$ and $r=25$. Top right: Rank evolution of the rank-adaptive integrator over time using $\bar{\vartheta}\in\{0.01,0.012,0.015\}$. Bottom: Corresponding approximations for expectation and standard deviation.}
\label{fig:ExpectationStdU}
\end{figure}

	\begin{acknowledgements} 
		This work was funded by the Deutsche Forschungsgemeinschaft (DFG, German Research Foundation) --- Project-ID 258734477 --- SFB 1173. It started out from a discussion at the 2021 annual meeting of SFB 1173. We thank Martin Frank (KIT) for fruitful discussions on radiation transport theory.
	\end{acknowledgements}
	
	\bibliographystyle{abbrv}
	\bibliography{dlradapt} 
	
\end{document}